\newtheorem{theorem}{Theorem}
\newtheorem{definition}{Definition}
\newtheorem{lemma}{Lemma}
\newtheorem{example}{Example}
\newcommand\eeye[1][]{\eI_{#1}}
\newcommand\ej[1]{e_{\ttj_#1}}
\begin{document}

\title{\bf Quantum Fourier Transform Revisited}
\author{%
Daan Camps\textsuperscript{1,}%
\footnote{Lawrence Berkeley National Laboratory,
1 Cyclotron Road, Berkeley, CA 94720. Email: \texttt{DCamps@lbl.gov}.} ,
Roel Van Beeumen\textsuperscript{1},
Chao Yang\textsuperscript{1,}}
\date{\small%
\textsuperscript{1}Computational Research Division,
Lawrence Berkeley National Laboratory, CA, United States
}

\maketitle

\abstract{
The fast Fourier transform (FFT) is one of the most successful numerical
algorithms of the 20th century and has found numerous applications in many
branches of computational science and engineering.
The FFT algorithm can be derived from a particular matrix decomposition of the
discrete Fourier transform (DFT) matrix.
In this paper, we show that the quantum Fourier transform (QFT) can be
derived by further decomposing the diagonal factors of the FFT matrix
decomposition into products of matrices with Kronecker product structure.
We analyze the implication of this Kronecker product structure on
the discrete Fourier transform of rank-1 tensors on a classical computer.
We also explain why such a structure can take advantage of an important quantum
computer feature that enables the QFT algorithm to attain an exponential speedup
on a quantum computer over the FFT algorithm on a classical computer. 
Further, the connection between the matrix decomposition of 
the DFT matrix and a quantum circuit is made.  We also discuss 
a natural extension of a radix-2 QFT decomposition to a radix-$d$ QFT decomposition.
No prior knowledge of quantum computing is required to understand
what is presented in this paper. Yet, we believe this paper 
may help readers to gain some rudimentary understanding of the nature of 
quantum computing from a matrix computation point of view.
}

%\keywords{Discrete Fourier Transform, Fast Fourier Transform, Quantum Fourier Transform, Matrix and Tensor Decomposition, Kronecker Product, Quantum Circuits}

\section{Introduction}
The fast Fourier transform (FFT)~\cite{CT1965} is a widely celebrated algorithmic
innovation of the 20th century~\cite{top10}. The algorithm allows us to perform
a discrete Fourier transform (DFT) of a vector of size $N$ in $\bigO(N\log N)$
operations. This is much more efficient than
the $\bigO(N^2)$ operations required in a
brute force calculation in which the $N\times N$ matrix representation of the
DFT is directly multiplied with the vector to be transformed. This efficiency
improvement is quite remarkable and has enabled a wide range of
applications such as signal processing and spectral methods for solving
partial differential equations.
However, the computational complexity of the transform can be further reduced
to $\bigO( (\log N)^2 )$ by using the quantum Fourier transform (QFT) on a
quantum computer.
This is well known in the quantum computing community and is often touted
as an example of the type of exponential speedup a quantum computer
can achieve.  So what makes the QFT so efficient?

In this paper, we will explain what the QFT exactly does, and why it can
achieve an additional exponential factor of speedup. Unlike the derivation 
provided in other references~\cite{Coppersmith1994,NC2010},
our explanation of the QFT requires little knowledge of quantum computing. 
We use the language of matrix and tensor decompositions to 
describe and compare the operations performed in the QFT and FFT.
Just like the FFT, the QFT relies on a special matrix factorization of
the DFT matrix to attain its efficiency.
This factorization produces a product of $\bigO((\log N)^2)$ simpler unitary
matrices that can in turn be written as the sum of 2 Kronecker
products of a $2\times 2$ matrix with $2\times2$ identity matrices.
This derivation of the QFT factorization only requires some knowledge of the
elementary properties of Kronecker products.

We point out that a key distinction between a classical computer and a 
quantum computer makes the QFT decomposition well suited for
a quantum computer.  This is also what makes the QFT much more efficient on
a quantum computer. On the other hand, we will also mention the 
limitation of QFT in terms of its application in classical algorithms
such as fast convolution~\cite{NC2010,Lomont2003}.

This paper is mainly pedagogical in nature.
Apart from giving a matrix and tensor decomposition oriented derivation of the 
QFT algorithm, presenting alternative quantum circuits for the QFT, and
extending the QFT algorithm for qubit based quantum computers
to algorithms for qudit based quantum computers,
it does not contain new results.
However, our presentation should make it easier for researchers in 
traditional numerical computation to understand the features and limitations
of quantum algorithms and the potential of quantum computing.

The paper is structured as follows.
We first review the definition of tensor and Kronecker products
and its properties in section~\ref{sec:tensors}. We then examine
a radix-2 FFT algorithm in section~\ref{sec:fft}, which is applied to
vectors of dimension $N = 2^n$, and describe the matrix factorization
associated with the FFT algorithm.
Similar approaches are used in \cite{Pease1968, Sloate1974,Drubin1971,VL1992}.
In section~\ref{sec:qft}, we show how the matrix decomposition
produced by the FFT algorithm can be modified to produce a
matrix decomposition for the QFT algorithm.
In section~\ref{sec:QC}, we explain why QFT can achieve 
$\bigO((\log N)^2)$ complexity on a quantum computer
and show how the QFT can be more conveniently described by a quantum circuit.
In section~\ref{sec:qudit}, we generalize the radix-2 QFT algorithm 
to a radix-$d$ QFT algorithm and present the quantum circuit for implementing
such a QFT on a $d$-level quantum computer.
The QFT on $d$-level quantum computers was also studied in
\cite{Cao2011, Stroud2002, Dogra2015}.

Throughout the paper, we denote vectors by lowercase Roman characters and
matrices by capital Roman characters, e.g., $v$ and $M$.
Matrices and vectors of exponential dimension $2^n$, and in general $d^n$,
are denoted by, e.g., $\eM_n$ and $\exv_n$, respectively.
For example, $\eeye[n]=\eye[2^n]$ denotes the $2^n \times 2^n$ identity matrix.
The value of a (classical) bit is denoted as $\ttj$ and is either $0$ or $1$ in the
radix-2 case, and $\{0,1,\hdots,d-1\}$ in the radix-$d$ case.

\section{Tensor and Kronecker products}
\label{sec:tensors}

In this section we briefly review the Kronecker product and its properties.
We start by defining the Kronecker product of two matrices.

%%% DEFINITION %%%
\begin{definition}[Kronecker product]
\label{def:kronprod}
The \emph{Kronecker product} of the matrices $A \inC[n][m]$ and $B \inC[p][q]$
is defined as the following $np \times mq$ block matrix
\[
A \otimes B :=
\begin{bmatrix}
a_{11} B & a_{12} B & \cdots & a_{1,m} B \\
a_{21} B & a_{22} B & \cdots & a_{2,m} B \\
\vdots   &  \vdots  & \ddots & \vdots    \\
a_{n,1} B &  a_{n,2} B  & \cdots & a_{n,m} B \\
\end{bmatrix},
\]
where $a_{ij}$ is the $(i,j)$th element of $A$.
\end{definition}

The Kronecker product is a special case of the tensor product, hence,
it is bilinear
\begin{equation}
\begin{aligned}
(\gamma A) \otimes B &= A \otimes (\gamma B) = \gamma (A \otimes B), \\
A \otimes (B + C) &= A \otimes B + A \otimes C, \\
(B + C) \otimes A &= B \otimes A + C \otimes A.
\end{aligned}
\label{eq:kron-prop}
\end{equation}
Another important property is the \emph{mixed-product} identity
\begin{equation}
(A\otimes B)(C\otimes D) = (AC) \otimes (BD)
\label{eq:kron-mixedprod}
\end{equation}
for multiplying Kronecker products of matrices of compatible dimension.
The \emph{direct sum}, not to be confused by the Kronecker sum, is defined as follows.

%%% DEFINITION %%%
\begin{definition}[Direct sum]
\label{def:kronsum}
The \emph{direct sum} of the square matrices $A \inCC{n}$ and $B \inCC{m}$
is defined as the following $(n+m) \times (n+m)$ diagonal block matrix
\[
A \oplus B := \begin{bmatrix} A & \\ & B \end{bmatrix}.
\]
\end{definition}

Rank-$1$ tensors (of vectors) and tensor rank decompositions
will play a key role in the remainder of the
paper.
We therefore formally define them and list some important properties.

%%% DEFINITION %%%
\begin{definition}[Tensor rank decomposition]
\label{def:v-rank1}
Let $v_1,v_2,\ldots,v_n \inC[2]$.
Then the $2^n$ dimensional vector
\[
\bfv_n = v_1 \otimes v_2 \otimes \cdots \otimes v_n,
\]
is a rank-$1$ $n$-way tensor of size $2$ in every mode.
In general, $\exv_n$ is a vector with tensor rank-$r$ if its minimal representation
as a sum of vectors of tensor rank-1 requires $r$ terms,
\begin{equation}
\exv_n = \sum_{i=1}^r v_1\p{i} \otimes v_2\p{i} \otimes \cdots \otimes v_n\p{i}.
\label{eq:cpd}
\end{equation}
\end{definition}
\noindent
The decomposition \eqref{eq:cpd} is known as a tensor rank decomposition or canonical polyadic decomposition
\cite{Hitchcock1927}.

In this paper, we will use the \emph{big-endian} binary convention,
formalized by the following definition.
We also state two lemmas related to binary representations of integers and
tensors.
These results will be used in some proofs later on.

%%% DEFINITION %%%
\begin{definition}[Binary representation]
\label{def:bin}
We define the binary representation of $j \inN: 0 \leq j \leq 2^n-1$ as follows
\[
j = [\ttj_1 \ttj_2 \cdots \ttj_{n-1} \ttj_n]
  = \ttj_1 \cdot 2^{n-1} + \ttj_2 \cdot 2^{n-2} + \cdots +
    \ttj_{n-1} \cdot 2^1 + \ttj_n \cdot 2^0,
%\qquad \ttj_i \in \lbrace 0,1 \rbrace.
\]
where $\ttj_i \in \lbrace 0,1 \rbrace$ for $i = 1,\ldots,n$.
\end{definition}

%%% LEMMA %%%
\begin{lemma}
\label{lem:bin-pow}
For every scalar $\alpha \inC$ and $j \inN: 0 \leq j \leq 2^n-1$, we have
\[
\alpha^j = \alpha^{\ttj_1 2^{n-1}} \alpha^{\ttj_2 2^{n-2}} \cdots
           \alpha^{\ttj_{n-1} 2^{1}} \alpha^{\ttj_n 2^{0}}.
\]
\end{lemma}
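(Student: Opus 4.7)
The proof will be essentially a one-line calculation that simply unpacks the binary representation of $j$ and applies the elementary exponent law $\alpha^{a+b} = \alpha^a \alpha^b$. The plan is to start from the definition of the binary expansion given in Definition~\ref{def:bin}, substitute it into the exponent $j$ of $\alpha^j$, and then repeatedly apply the rule that powers split as products over sums in the exponent.

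More concretely, I would first write
\[
\alpha^j = \alpha^{\ttj_1 \cdot 2^{n-1} + \ttj_2 \cdot 2^{n-2} + \cdots + \ttj_{n-1} \cdot 2^1 + \ttj_n \cdot 2^0},
\]
using Definition~\ref{def:bin} directly. Then I would split this into the product
\[
\alpha^{\ttj_1 2^{n-1}} \alpha^{\ttj_2 2^{n-2}} \cdots \alpha^{\ttj_{n-1} 2^1} \alpha^{\ttj_n 2^0},
\]
by invoking the multiplicative property of the complex exponential $n-1$ times. Strictly speaking, one could phrase this as a trivial induction on $n$, but that feels like overkill for what is essentially the statement that $\alpha^{a+b} = \alpha^a\alpha^b$ iterated.

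There is no real obstacle here; the lemma is purely notational, setting up a convenient factored form of $\alpha^j$ that will be applied later (presumably with $\alpha$ a root of unity in the DFT/QFT derivation). The only thing worth noting is that the identity $\alpha^{a+b} = \alpha^a \alpha^b$ holds unconditionally for $\alpha \in \mathbb{C}$ with integer exponents $a,b \geq 0$, which is exactly the regime we are in since each $\ttj_i 2^{n-i}$ is a nonnegative integer, so no branch-cut subtleties arise.
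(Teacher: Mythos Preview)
Your proposal is correct and matches the paper's own proof, which simply states that the result follows directly from Definition~\ref{def:bin}. You have merely spelled out explicitly the use of $\alpha^{a+b}=\alpha^a\alpha^b$ that the paper leaves implicit.
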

%%% PROOF %%%
\begin{proof}
The proof directly follows from \Cref{def:bin}.
\end{proof}

%%% LEMMA %%%
\begin{lemma}
\label{lem:bin-vec}
Let $v_1,v_2,\ldots,v_n \inC[2]$.
Then
\[
\bfv_n := v_1 \otimes v_2 \otimes \cdots \otimes v_n
\qquad \Longleftrightarrow \qquad
\bfv_n(j) = v_1(\ttj_1) v_2(\ttj_2) \cdots v_n(\ttj_n),
\]
where $\bfv_n(j)$ denotes the $j$th element of the vector $\bfv_n \inC[2^n]$.
\end{lemma}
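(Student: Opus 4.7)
The plan is to prove the equivalence by induction on $n$, exploiting the recursive block structure of the Kronecker product together with the big-endian bit splitting $j = \mathtt{j}_1 \cdot 2^{n-1} + j'$ where $j' = [\mathtt{j}_2 \cdots \mathtt{j}_n]$ is an integer in the range $0 \le j' \le 2^{n-1} - 1$.

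The base case $n=1$ is immediate: $\mathbf{v}_1 = v_1$ and $\mathbf{v}_1(j) = v_1(\mathtt{j}_1)$ since $j = \mathtt{j}_1 \in \{0,1\}$ is already its own one-bit representation. For the inductive step, assume the claim holds for $n-1$, and set $\mathbf{v}_{n-1} := v_2 \otimes \cdots \otimes v_n$. By associativity of the Kronecker product we have $\mathbf{v}_n = v_1 \otimes \mathbf{v}_{n-1}$. Applying \Cref{def:kronprod} to the $2 \times 1$ vector $v_1$ and the $2^{n-1}$-dimensional vector $\mathbf{v}_{n-1}$, the resulting $2^n$-dimensional vector has block form
\[
v_1 \otimes \mathbf{v}_{n-1} = \begin{bmatrix} v_1(0)\, \mathbf{v}_{n-1} \\ v_1(1)\, \mathbf{v}_{n-1} \end{bmatrix}.
\]
Thus the top half (indices $0 \le j \le 2^{n-1}-1$, i.e.\ $\mathtt{j}_1 = 0$) equals $v_1(0)\,\mathbf{v}_{n-1}(j')$, and the bottom half (indices $2^{n-1} \le j \le 2^n-1$, i.e.\ $\mathtt{j}_1 = 1$) equals $v_1(1)\,\mathbf{v}_{n-1}(j')$, where in both cases $j' = j - \mathtt{j}_1 \cdot 2^{n-1}$. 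In a single formula, $\mathbf{v}_n(j) = v_1(\mathtt{j}_1)\, \mathbf{v}_{n-1}(j')$.

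Applying the inductive hypothesis to $\mathbf{v}_{n-1}$ with binary expansion $j' = [\mathtt{j}_2 \cdots \mathtt{j}_n]$ gives $\mathbf{v}_{n-1}(j') = v_2(\mathtt{j}_2) \cdots v_n(\mathtt{j}_n)$, and the forward direction follows. The reverse direction is obtained by the same calculation: if the entries factor as $\mathbf{v}_n(j) = v_1(\mathtt{j}_1) \cdots v_n(\mathtt{j}_n)$ for every $j$, then reading off the entries coordinate-by-coordinate matches exactly the block pattern above, so $\mathbf{v}_n$ equals the Kronecker product $v_1 \otimes \cdots \otimes v_n$.

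The only real pitfall is bookkeeping on the index conventions—using $0$-based indexing consistent with \Cref{def:bin} while \Cref{def:kronprod} is stated with $1$-based matrix entries—and confirming that the big-endian convention puts $\mathtt{j}_1$ as the block-selector (i.e.\ most significant bit) rather than the within-block offset. Once this identification $j \leftrightarrow (\mathtt{j}_1, j')$ is fixed, the induction is a one-line unfolding and there is no substantive obstacle.
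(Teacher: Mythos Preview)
Your proof is correct and is essentially the same approach as the paper's, which simply states that the result follows directly from \Cref{def:kronprod,def:bin}; your induction on $n$ is just an explicit unfolding of those two definitions. The only difference is level of detail: the paper treats the claim as an immediate consequence, whereas you spell out the recursive block structure and the big-endian index split $j = \mathtt{j}_1 \cdot 2^{n-1} + j'$.
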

%%% PROOF %%%
\begin{proof}
The proof directly follows from \Cref{def:kronprod,def:bin}.
\end{proof}

We define two elementary matrices that are useful for our analysis. 
%%% DEFINITION %%%
\begin{definition}\label{def:E1E2}
Let $e_1$ and $e_2$ be the first and second column of $\eye[2]$.
Then we define the matrices:
\begin{align*}
E_1 &:= e_1 e_1\T = \begin{bmatrix} 1 & 0\\ 0 & 0 \end{bmatrix}, &
E_2 &:= e_2 e_2\T = \begin{bmatrix} 0 & 0\\ 0 & 1 \end{bmatrix}.
\end{align*}
\end{definition}

We clearly have that $E_1 + E_2 = \eye[2]$.
A particular useful application of \Cref{def:E1E2}
is the decomposition of the direct sum of two matrices
as a sum of two Kronecker product terms:
\begin{equation}\label{eq:sumtokron}
A \oplus B = E_1 \otimes A + E_2 \otimes B.
\end{equation}

\section{Matrix Decomposition for Fast Fourier Transform}
\label{sec:fft}

The discrete Fourier transform maps a series of $N$ complex numbers in
another series of $N$ complex numbers as defined below.

%%% DEFINITION %%%
\begin{definition}[DFT \cite{VL1992}]
\label{ddft}
The discrete Fourier transform of a vector
$x = [x_{0}, \hdots, x_{N-1}]^T \inC[N]$
is defined as the vector
$y = [y_{0}, \hdots, y_{N-1}]^T \inC[N]$
with
\begin{equation}
y_k = \Frac{1}{\sqrt{N}}\Sum_{j=0}^{N-1} \omega_{N}^{kj} \, x_{j},
\qquad k = 0,1,\ldots,N-1,
\label{edft}
\end{equation}
where $\omega_{N} := \exp(\frac{-2\pi\I}{N})$ is an $N$th root of unity.
\end{definition}

Similarly, the inverse discrete Fourier transform (IDFT) is given by
\eqref{edft} where $\omega_N := \exp(\frac{2\pi\I}{N})$ is now
the principal $N$th root of unity.
The expression \eqref{edft} can be written as the multiplication of
a DFT matrix $F_N$ to be defined below with the vector $x$.

% --- The DFT matrix --- %
\subsection{The DFT matrix}
\label{sec:dft}

\Cref{ddft}, or variants thereof with a scalar factor different from $\frac{1}{\sqrt{N}}$\footnote{
The scalar factors of the DFT and the inverse DFT need to multiply to $\frac{1}{N}$. The choice $\frac{1}{\sqrt{N}}$
leads to unitary transformations.},
are most commonly used throughout the literature.
As we focus on the matrix representation and decompositions of that matrix representation,
the following equivalent characterization of the DFT is more useful.
A similar approach is followed in \cite{Pease1968, Sloate1974,Drubin1971,VL1992}.
Van Loan \cite{VL1992} even states: ``I am convinced that life as we know it
would be considerably different if, from the 1965 Cooley--Tukey paper onwards,
the FFT community had made systematic and heavy use of matrix-vector notation!''.

%%% DEFINITION %%%
\begin{definition}[DFT matrix]
\label{def:dft-matrix}
The DFT matrix is defined as the unitary matrix
\begin{equation}
F_N := \Frac{1}{\sqrt{N}}
\begin{bmatrix}
\omega_{N}^{0} & \omega_{N}^{0} & \omega_{N}^{0} & \cdots & \omega_{N}^{0}\\
\omega_{N}^{0} & \omega_{N}^{1} & \omega_{N}^{2} & \cdots & \omega_{N}^{N-1}\\
\omega_{N}^{0} & \omega_{N}^{2} & \omega_{N}^{4} & \cdots & \omega_{N}^{2(N-1)}\\
\vdots & \vdots & \vdots & \ddots & \vdots \\
\omega_{N}^{0} & \omega_{N}^{N-1} & \omega_{N}^{2(N-1)} & \cdots & \omega_{N}^{(N-1)(N-1)}
\end{bmatrix} \inCC{N},
\label{dftmat}
\end{equation}
where $\omega_{N} := \exp(\frac{-2\pi\I}{N})$.
\end{definition}

Using \Defref{def:dft-matrix}, the DFT and IDFT become matrix-vector multiplications $y = F_N x$ and $y = F_N\H x$, respectively.
As an example, the first four DFT matrices are given by:
\[
F_1 = \begin{bmatrix} 1 \end{bmatrix},
\quad
F_2 = \Frac{1}{\sqrt{2}}\begin{bmatrix} 1 & \phantom{-}1\\ 1 & -1\end{bmatrix},
\quad
F_3= \Frac{1}{\sqrt{3}}\begin{bmatrix} 1 & 1 & 1\\ 1 & \omega_{3} & \omega_{3}^{2}\\ 1 & \omega_{3}^{2} & \omega_{3}\end{bmatrix},
\quad
F_4 = \Frac{1}{2}\begin{bmatrix} 1 & \phantom{-}1 & \phantom{-}1 & \phantom{-}1\\ 1 & -\I & -1 & \phantom{-}\I \\ 1 & -1 & \phantom{-}1 & -1\\ 1 & \phantom{-}\I & -1 & -\I \end{bmatrix},
\]
where we used that $\omega_{N}$ simplifies for $N= 1,2,$ and $4$, and the equality $\omega_{N}^m = \omega_{N}^{(m\bmod{N})}$.
The $2 \times 2$ DFT matrix $F_2$ is also called the \emph{Hadamard} matrix and denoted as $H$.

In order to simplify notation, we will make use of the exponent notation
for the DFT matrix as introduced by Pease \cite{Pease1968}:
\[
\Ftil_{N} =
\begin{bmatrix}
0 & 0 & 0 & \cdots & 0\\
0 & 1 & 2 & \cdots & N-1\\
0 & 2 & 4 & \cdots & 2(N-1)\\
\vdots & \vdots & \vdots & \vdots & \vdots \\
0 & N-1 & 2(N-1) & \cdots & (N-1)(N-1)
\end{bmatrix},
\]
where only the exponents of $\omega_N$ are listed in the matrix $\Ftil_N$.
For the remainder of the paper, all exponent matrices will be indicated
by a tilde.
Note that by using exponent notation, an multiplication of $\omega_N$ factors becomes an addition of its exponents.

% --- Radix-2 decomposition --- %
\subsection{Radix-$2$ decomposition of the discrete Fourier transform matrix}\label{r2}

A straightforward implementation of the (inverse) DFT as a matrix-vector product
requires $\bigO(N^2)$ operations.
The radix-2 FFT algorithm, popularized by Cooley and Tukey \cite{CT1965}, computes the DFT for vectors of
length $N = 2^n$ in only $\bigO(N \log_2{N}) = \bigO(2^n n)$ operations.
This speedup is achieved by a divide-and-conquer approach which relates a permuted DFT matrix of dimension $N$ to
a permuted DFT matrix of size $N/2$.
Let us illustrate this matrix decomposition for the DFT matrix of dimension $8$.

%%% EXAMPLE %%%
\begin{example}\label{DFT8}
The exponent DFT matrix of dimension $8$ is given by:
\[
\Ftil_8 =
\begin{bmatrix}
0 & 0 & 0 & 0 & 0 & 0 & 0 & 0\\
0 & 1 & 2 & 3 & 4 & 5 & 6 & 7\\
0 & 2 & 4 & 6 & 8 & 10 & 12 & 14\\
0 & 3 & 6 & 9 & 12 & 15 & 18 & 21\\
0 & 4 & 8 & 12 & 16 & 20 & 24 & 28\\
0 & 5 & 10 & 15 & 20 & 25 & 30 & 35\\
0 & 6 & 12 & 18 & 24 & 30 & 36 & 42\\
0 & 7 & 14 & 21 & 28 & 35 & 42 & 49\\
\end{bmatrix}
\stackrel{\bmod{8}}{\equiv}
\begin{bmatrix}
0 & 0 & 0 & 0 & 0 & 0 & 0 & 0\\
0 & 1 & 2 & 3 & 4 & 5 & 6 & 7\\
0 & 2 & 4 & 6 & 0 & 2 & 4 & 6\\
0 & 3 & 6 & 1 & 4 & 7 & 2 & 5\\
0 & 4 & 0 & 4 & 0 & 4 & 0 & 4\\
0 & 5 & 2 & 7 & 4 & 1 & 6 & 3\\
0 & 6 & 4 & 2 & 0 & 6 & 4 & 2\\
0 & 7 & 6 & 5 & 4 & 3 & 2 & 1\\
\end{bmatrix}\ \,
\begin{matrix}
000\\
001\\
010\\
011\\
100\\
101\\
110\\
111\\
\end{matrix}\ \ ,
\]
where in the matrix on the right the exponents are given modulo $8$.
The binary representation of the row indices are shown on the right side of the matrix.
Permuting the rows such that the binary representation of the row indices reverse in order results
in the matrix $\Ftil_{8}^{\prime}$:
\[
\Ftil_{8}^{\prime} =
\left[\begin{array}{c|c}
\begin{matrix}
  0 & 0 & 0 & 0 \\
  0 & 4 & 0 & 4 \\
  0 & 2 & 4 & 6 \\
  0 & 6 & 4 & 2
\end{matrix} & \begin{matrix}
  0 & 0 & 0 & 0 \\
  0 & 4 & 0 & 4 \\
  0 & 2 & 4 & 6 \\
  0 & 6 & 4 & 2
\end{matrix} \\ \hline
\begin{matrix}
  0 & 1 & 2 & 3 \\
  0 & 5 & 2 & 7 \\
  0 & 3 & 6 & 1 \\
  0 & 7 & 6 & 5
\end{matrix} & \begin{matrix}
  4 & 5 & 6 & 7 \\
  4 & 1 & 6 & 3 \\
  4 & 7 & 2 & 5 \\
  4 & 3 & 2 & 1
\end{matrix}
\end{array}\right]
\begin{array}{c}
  000 \\
  100 \\
  010 \\
  110 \\ \hline
  001 \\
  101 \\
  011 \\
  111
\end{array}
=
\left[\begin{array}{c|c}
\Ftil_{4}^{\prime} & \Ftil_{4}^{\prime}\\
\hline
\Ftil_{4}^{\prime} + \Omegat_{4} & \Ftil_{4}^{\prime} + \Omegat_{4} + 4\\
\end{array}\right],
\]
with
\[
\Ftil_{4}^{\prime} = \begin{bmatrix} 0 & 0 & 0 & 0\\ 0 & 2 & 0 & 2\\ 0 & 1 & 2 & 3\\ 0 & 3 & 2 & 1 \end{bmatrix}, \qquad
\Omegat_4 = \begin{bmatrix} 0 & 1 & 2 & 3\\ 0 & 1 & 2 & 3\\ 0 & 1 & 2 & 3\\ 0 & 1 & 2 & 3 \end{bmatrix}.
\]
The left side of the above expression shows a block partitioning of $\Ftil^{\prime}_8$
in terms of the bit-reversed exponent DFT matrix of half the dimension, $\Ftil_{4}^{\prime}$.
Here we again applied the modulo $8$ equivalence
and the property that the entries in $\Ftil_{4}^{\prime}$ must be doubled when used in $\Ftil_{8}^{\prime}$ because $\omega_4 = \omega_{8}^2$.
This partitioning can be written in terms of the permuted DFT matrices as:
\[ F^{\prime}_8 = \frac{1}{\sqrt{2}}
\left[\begin{array}{c|c}
F^{\prime}_4 & F^{\prime}_4\\
\hline
F^{\prime}_4 \Omega_4 & -F^{\prime}_4\Omega_4
\end{array}\right],
\qquad \text{with} \qquad \Omega_{4} =\diag(1,\omega_{8}^1,\omega_{8}^2,\omega_{8}^3),
\]
where the minus sign in the $(2,2)$-block comes from $\omega_{8}^{4} = -1$.
\end{example}

The generalization of the permutation and partition to a DFT matrix of
dimension $2^n$ for $n>2$ requires the definition of $\Omega_{2^n}$, which
we give below.

%%% DEFINITION %%%
\begin{definition}
\label{def:bOn}
Define $\bOmega_n$ as the following $2^{n} \times 2^{n}$ diagonal matrix: 
\[
\eOmega_{n} = \Omega_{2^{n}} :=
\begin{bmatrix} \omega_{2^{n+1}}^{0} & & & \\ & \omega_{2^{n+1}}^{1} & & \\ & & \ddots & \\ & & & \omega_{2^{n+1}}^{2^{n}-1} \end{bmatrix},
\]
where $\omega_{2^{n+1}} := \exp(\frac{-2\pi\I}{2^{n+1}})$.
\end{definition}

The block partitioning of the $4\times 4$ DFT matrix that we demonstrated 
in \Cref{DFT8} can be generalized for any $2^n \times 2^n$ DFT matrix as stated in the following theorem.

%%% THEOREM %%%
\begin{theorem}[See \cite{Sloate1974,Pease1968}]\label{thm:r2split}
Let $\eF_{n}^{\prime} = \eP_n \eF_n$ be the $2^n \times 2^n$ \emph{bit-reversed} DFT matrix,
where $\eP_n$ is the bit reversal permutation matrix acting on $n$ bits.
Then $\eF_{n}^{\prime}$ admits the following factorization:
\begin{equation}\label{eq:r2split}
\begin{aligned}
\eF_{n}^{\prime}
& = \Frac{1}{\sqrt{2}}
\begin{bmatrix} \eF_{n-1}^{\prime} & \eF_{n-1}^{\prime}\\ \eF_{n-1}^{\prime} \eOmega_{n-1} & - \eF_{n-1}^{\prime} \eOmega_{n-1} \end{bmatrix},
\\[5pt]
& =
\begin{bmatrix} \eF_{n-1}^{\prime} & \\ & \eF_{n-1}^{\prime} \end{bmatrix}
\begin{bmatrix} \eeye[n-1] &  \\ & \eOmega_{n-1} \end{bmatrix}
\left(\Frac{1}{\sqrt{2}}\begin{bmatrix}
\eeye[n-1] & \phantom{-}\eeye[n-1] \\
\eeye[n-1] & -\eeye[n-1]\end{bmatrix}\right),
\\[5pt]
& =
(\eye[2] \otimes \eF_{n-1}^{\prime})
(\eeye[n-1] \oplus \eOmega_{n-1})
(H \otimes \eeye[n-1]),
\end{aligned}
\end{equation}
where $\eOmega_{n-1}$ is given by \Cref{def:bOn}.
\end{theorem}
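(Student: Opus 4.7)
The plan is to establish the three lines of \eqref{eq:r2split} in order, treating the first as the substantive content and the remaining two as formal rewrites.

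For the first equality, I would work elementwise, parametrizing the row and column indices by their binary digits. Write $k = k_1 \cdot 2^{n-1} + k'$ with $k' \in \{0,\dots,2^{n-1}-1\}$ and $j = j_1 \cdot 2^{n-1} + j'$ similarly, so that the four $2^{n-1} \times 2^{n-1}$ blocks of $\eF_n^{\prime}$ correspond to the four choices of $(k_1, j_1) \in \{0,1\}^2$. Since $\eF_n^{\prime}$ is obtained from $\eF_n$ by bit-reversing the rows, its $(k,j)$ entry equals $\tfrac{1}{\sqrt{2^n}}\omega_{2^n}^{r(k)\, j}$, where $r(k) = k_1 + 2\, r_{n-1}(k')$ by the recursive structure of bit reversal (here $r_{n-1}$ is bit reversal on $n-1$ bits). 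Expanding the product $r(k) \cdot j$ and reducing modulo $2^n$, the term $2^n\, r_{n-1}(k')\, j_1$ vanishes, leaving
\[
\omega_{2^n}^{r(k)\, j} \;=\; (-1)^{k_1 j_1}\, \omega_{2^n}^{k_1 j'}\, \omega_{2^{n-1}}^{r_{n-1}(k')\, j'}.
\]
The last factor together with the normalization $\tfrac{1}{\sqrt{2^{n-1}}}$ is exactly $(\eF_{n-1}^{\prime})_{k',j'}$, leaving an overall prefactor of $\tfrac{1}{\sqrt 2}$. Reading off the four $(k_1, j_1)$ cases recovers the claimed block form: $k_1 = 0$ gives the top row of blocks with no extra factor; $k_1 = 1, j_1 = 0$ contributes $\omega_{2^n}^{j'}$, which is the $(j',j')$ entry of $\eOmega_{n-1}$ acting on the right of $\eF_{n-1}^{\prime}$; and $k_1 = j_1 = 1$ adds the sign flip $(-1)^{k_1 j_1}$.

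The second equality is a routine block-matrix multiplication: the outer block-diagonal factor distributes $\eF_{n-1}^{\prime}$ to both block rows, the middle factor post-multiplies the bottom row by $\eOmega_{n-1}$, and the rightmost factor produces the $\pm \eeye[n-1]$ pattern with the $\tfrac{1}{\sqrt 2}$ normalization. The third equality is immediate from the definitions: $\eye[2] \otimes \eF_{n-1}^{\prime}$ is exactly the first block-diagonal matrix by \Cref{def:kronprod}, $\eeye[n-1] \oplus \eOmega_{n-1}$ is the middle one by \Cref{def:kronsum}, and $H \otimes \eeye[n-1]$ reproduces the last one because $H = \tfrac{1}{\sqrt 2}\left[\begin{smallmatrix} 1 & 1 \\ 1 & -1\end{smallmatrix}\right]$.

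The only real obstacle is bookkeeping in the first step: keeping the bit-reversal recursion $r_n(k) = k_1 + 2\, r_{n-1}(k')$ straight and verifying that the cross term $2^n\, r_{n-1}(k')\, j_1$ indeed drops out modulo $2^n$, so that the $\omega_{2^n}^{k_1 j'}$ factor collapses into (a right multiplication by) $\eOmega_{n-1}$ in precisely the bottom block row. Once that exponent arithmetic is pinned down, the remaining identities follow mechanically from the definitions already recorded in \Cref{def:kronprod,def:kronsum,def:bOn}.
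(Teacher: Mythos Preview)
Your proposal is correct. The paper does not actually prove \Cref{thm:r2split}; it states the result with a citation to Sloate and Pease and illustrates the mechanism via \Cref{DFT8} for $n=3$. Your elementwise argument is the natural general version of that example: the exponent arithmetic you carry out---splitting $r(k)\cdot j$ and dropping the $2^n r_{n-1}(k') j_1$ term---is exactly the modular reduction displayed numerically in the $8\times 8$ case, and your identification of $\omega_{2^n}^{j'}$ with the diagonal of $\eOmega_{n-1}$ matches the role of $\Omegat_4$ there. The second and third lines are, as you say, direct rewrites via \Cref{def:kronprod,def:kronsum}. One small remark: it may be worth stating explicitly that $\eP_n$ is an involution so that $(\eF_n')_{k,j}=(\eF_n)_{r(k),j}$ holds without ambiguity about whether $r$ or $r^{-1}$ appears, but since bit reversal is self-inverse this is harmless.
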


The subdivision of the permuted DFT matrix in a $2 \times 2$ block matrix is also called a \emph{radix-$2$}
splitting in the literature.
The bit reversal permutation $\eP_n$ satisfies the following identity
\begin{equation}
\eP_n (v_1 \otimes \cdots \otimes v_n) = v_n \otimes \cdots \otimes v_1.
\label{eq:bitrev}
\end{equation}
where $v_k \inC[2]$ for $k = 1, \cdots, n$. The matrix $\eP_n$ 
is involutary, because $\eP_{n}^2 = \eeye[n]$, i.e., reversing the bits 
twice gives the original ordering. It is also unitary because it is a 
permutation matrix.

\Cref{thm:r2split} can be applied repeatedly to obtain the complete radix-2 factorization of the DFT matrix.
This is formalized in the following theorem.

%%% THEOREM %%%
\begin{theorem}\label{thm:r2fft}
The $2^n \times 2^n$ DFT matrix $\eF_n$ can be factored as:
\begin{equation}\label{er2fft}
\eF_{n} = \eP_n \eF_{n}^{\prime} = \eP_n \eA^{(0)}_n \eA^{(1)}_n \cdots \eA^{(n-1)}_{n},
\end{equation}
where, for $k=0,1, \hdots, n-1$,
\begin{equation}\label{er2fft2}
\eA^{(k)}_{n} = \eeye[n-k-1] \, \otimes \eB_{k+1},
\qquad \text{and} \qquad
\eB_{k+1}
= \Frac{1}{\sqrt{2}} \begin{bmatrix} \eeye[k] & \eeye[k] \\ \eOmega_{k} & -\eOmega_{k} \end{bmatrix}
= (\eeye[k] \oplus \eOmega_k)(H \otimes \eeye[k]),
\end{equation}
with $\eOmega_{k}$ given by \Cref{def:bOn}.
\end{theorem}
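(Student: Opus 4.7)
The plan is to prove \Cref{thm:r2fft} by induction on $n$, using \Cref{thm:r2split} as the driving recurrence. The base case $n=1$ is a quick sanity check: $\eP_1$ is the identity on two elements, and $A^{(0)}_1 = \eeye[0]\otimes\eB_1 = \eB_1 = (\eeye[0]\oplus\eOmega_0)(H\otimes\eeye[0]) = H = F_2 = \eF_1$, since $\eeye[0]$ and $\eOmega_0$ are both the $1\times 1$ identity. So the claim $\eF_1 = \eP_1 A^{(0)}_1$ holds trivially.

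For the inductive step, I assume the factorization for $n-1$, i.e.\
\[
\eF'_{n-1} = A^{(0)}_{n-1} A^{(1)}_{n-1} \cdots A^{(n-2)}_{n-1},
\]
which is equivalent to the theorem statement at level $n-1$ after stripping the bit-reversal. Applying \Cref{thm:r2split} at level $n$ gives
\[
\eF'_n \;=\; (\eye[2]\otimes \eF'_{n-1})\,(\eeye[n-1]\oplus\eOmega_{n-1})\,(H\otimes\eeye[n-1]).
\]
The rightmost two factors are, by definition, exactly $\eB_n = A^{(n-1)}_n$ (noting $A^{(n-1)}_n = \eeye[0]\otimes\eB_n = \eB_n$). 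So the task reduces to showing
\[
\eye[2]\otimes \eF'_{n-1} \;=\; A^{(0)}_n A^{(1)}_n \cdots A^{(n-2)}_n.
\]

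To do this, I substitute the inductive hypothesis for $\eF'_{n-1}$ and push the leading $\eye[2]\otimes(\cdot)$ through the product via the mixed-product identity~\eqref{eq:kron-mixedprod}:
\[
\eye[2]\otimes \eF'_{n-1} \;=\; \prod_{k=0}^{n-2}\bigl(\eye[2]\otimes A^{(k)}_{n-1}\bigr).
\]
Finally, each factor is massaged using associativity of the Kronecker product:
\[
\eye[2]\otimes A^{(k)}_{n-1} \;=\; \eye[2]\otimes\bigl(\eeye[n-2-k]\otimes \eB_{k+1}\bigr) \;=\; \bigl(\eye[2]\otimes\eeye[n-2-k]\bigr)\otimes \eB_{k+1} \;=\; \eeye[n-1-k]\otimes \eB_{k+1} \;=\; A^{(k)}_n,
\]
which closes the induction and yields $\eF'_n = A^{(0)}_n\cdots A^{(n-1)}_n$. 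Multiplying by $\eP_n$ on the left then gives the stated identity for $\eF_n$.

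The proof is essentially bookkeeping: no calculation is harder than associativity of $\otimes$ and the mixed-product identity. The only place one can slip is the index management, namely keeping straight that $A^{(k)}_n$ has an identity factor of size $2^{n-k-1}$ while $A^{(k)}_{n-1}$ has one of size $2^{n-k-2}$, so that tensoring with an extra $\eye[2]$ on the left increments the outer index by exactly one. That is the only nontrivial step, and it amounts to checking one line of arithmetic on the exponents.
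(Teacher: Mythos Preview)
Your proof is correct and follows essentially the same approach as the paper: induction on $n$, invoking \Cref{thm:r2split} for the recurrence, applying the mixed-product identity to distribute $\eye[2]\otimes(\cdot)$ across the product, and using associativity of the Kronecker product to identify $\eye[2]\otimes \eA^{(k)}_{n-1} = \eA^{(k)}_n$. The only cosmetic difference is that the paper indexes the inductive step from $n$ to $n+1$ while you go from $n-1$ to $n$.
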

%%% PROOF %%%
\begin{proof}
We give the proof for $\eF_{n}^{\prime}$ by induction based on \Cref{thm:r2split}.
The case $n=1$ can be directly verified:
\[
\eF_{1}^{\prime} = \eA_{1}^{(0)} = 1 \otimes \eB_1 = \frac{1}{\sqrt{2}}  \begin{bmatrix} 1 & \phantom{-}1\\ 1& -1 \end{bmatrix}.
\]
Assume $\eF^{\prime}_n = \eA^{(0)}_{n} \eA^{(1)}_{n} \cdots \eA^{(n-1)}_{n}$ as prescribed by \Cref{thm:r2fft}.
For the bit-reversed DFT matrix of size $2^{n+1}$ we have by \Cref{thm:r2split},
\begin{align*}
\eF^{\prime}_{n+1}
 &= (\eye[2] \otimes \eF_n^{\prime})
    (\eeye[n] \oplus \eOmega_n) (H \otimes \eeye[n]),\\
 &= \left(I_2 \otimes  \left[\eA_n\p0 \eA_n\p1 \cdots \eA_n\p{n-1}\right] \right)
    (\eeye[n] \oplus \eOmega_n) (H \otimes \eeye[n]),\\
 &= \left(I_2 \otimes \eA_n\p0\right) \left(I_2 \otimes \eA_n\p1\right) \cdots
    \left(I_2 \otimes \eA_n\p{n-1}\right) \eA_{n+1}^{(n)},\\
 &= \eA_{n+1}\p0 \eA_{n+1}\p1 \cdots \eA_{n+1}\p{n-1} \eA_{n+1}\p{n},
\end{align*}
where we used $\eye[2] \otimes \eA_n\p{k} = \eA_{n+1}\p{k}$ for
$k = 0,1,\ldots,n-1$, and $\eA_{n+1}\p{n} = \eB_{n+1}$.
\end{proof}

They key observation here is that each matrix $\eA_{n}^{(i)}$ in \eqref{er2fft2} only has two
nonzero elements on every row.
Consequently, the matrix-vector product $\eA_{n}^{(i)} v$ can be computed in only $\bigO(2^n)$ operations,
resulting in an overall $\bigO(2^n n)$ or $\bigO(N \log N)$ computational complexity for the Fourier transform of a vector of size $N = 2^n$.
This is why the FFT is called \emph{fast}.

% --- QFT decomposition of diagonal matrices --- %
\section{Matrix Decomposition for Quantum Fourier Transform}\label{sec:qft}
As we can see from the previous section, the reduction in complexity
from $\bigO(N^2)$ to $\bigO(N\log N)$ in the FFT algorithm 
essentially results from the Kronecker product structure that appears in
the matrix factors of $\eF^{\prime}_{n}$ in \eqref{eq:r2split}.
It is the $\eeye[k] \oplus \eOmega_k$ factor that retains the $N$ factor in
the complexity of the FFT algorithm because the multiplication of
that diagonal matrix with a vector has to be performed in $\bigO(N)$
operations.

It is conceivable that the complexity of the computation can potentially
be reduced further, at least in some special cases, if we can somehow rewrite 
this diagonal matrix as a
Kronecker product of $2 \times 2$ matrices.  It turns out that we can almost do
that.  In this section, we show that the diagonal matrix
$\eeye[k] \oplus \eOmega_k$ can be written as a product of
$k$ simpler matrices, each of which is the sum of two
Kronecker products of $2\times 2$ matrices.  This factorization
yields a decomposition of the DFT matrix that enables
the DFT to be performed efficiently on a quantum computer. 
Exactly how that is achieved will be discussed in the next section. 
Here we will simply refer to this decomposition as the decomposition 
used by the quantum Fourier transform (QFT).

We begin by showing that the diagonal matrix
$\eOmega_n$ can be written as a Kronecker product of $2 \times 2$ unitary matrices
of the following form.

%%% DEFINITION %%%
\begin{definition}
\label{def:Rn}
Define
\begin{equation}
R_n := \begin{bmatrix} \omega_{2^n}^{0} & \\ & \omega_{2^n}^{1} \end{bmatrix}
 = \begin{bmatrix} 1 & \\ & \omega_{2^n} \end{bmatrix},
\label{eq:Rn}
\end{equation}
where $\omega_{2^n} := \exp(\frac{-2\pi\I}{2^n})$.
\end{definition}

The matrix $R_n$ satisfies the identity stated in the following lemma.

%%% LEMMA %%%
\begin{lemma}
\label{lem:Rn}
Let $R_n$ be defined by \Cref{def:Rn}. Then
\[
R_n^{2^j} = R_{n-j},
\]
for $j \inN$.
\end{lemma}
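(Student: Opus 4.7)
The plan is a direct computation using the diagonal structure of $R_n$ and the definition of the primitive roots of unity $\omega_{2^n}$.

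First I would note that, because $R_n$ is diagonal, raising it to the power $2^j$ simply raises each diagonal entry to that power:
\[
R_n^{2^j} = \begin{bmatrix} 1^{2^j} & \\ & \omega_{2^n}^{2^j} \end{bmatrix}
         = \begin{bmatrix} 1 & \\ & \omega_{2^n}^{2^j} \end{bmatrix}.
\]
So the whole statement reduces to identifying the lower-right entry.

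Next I would unfold the definition $\omega_{2^n} = \exp(-2\pi\I/2^n)$ and compute
\[
\omega_{2^n}^{2^j} = \exp\!\left(\frac{-2\pi\I \cdot 2^j}{2^n}\right)
                  = \exp\!\left(\frac{-2\pi\I}{2^{n-j}}\right)
                  = \omega_{2^{n-j}},
\]
which is exactly the lower-right entry of $R_{n-j}$ by \Cref{def:Rn}. Substituting back yields $R_n^{2^j} = R_{n-j}$.

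There is no real obstacle here; the only thing to watch is that the identity is stated for general $j \in \N$, so strictly speaking one should either restrict to $0 \le j \le n$ (so $n-j \ge 0$ and $R_{n-j}$ is defined as in \Cref{def:Rn}) or observe that for $j \ge n$ the right-hand side degenerates to the identity, consistent with $\omega_{2^n}^{2^j} = 1$ in that regime. Since both sides are $2\times 2$ diagonal matrices, a single scalar computation suffices and no induction on $j$ is needed.
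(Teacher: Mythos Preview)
Your proof is correct and follows essentially the same direct computation as the paper: both arguments reduce the claim to the scalar identity $\omega_{2^n}^{2^j} = \omega_{2^{n-j}}$, verified by unfolding the exponential, and then read off the diagonal entries of $R_n^{2^j}$. Your additional remark about the range of $j$ is a reasonable caveat that the paper does not address explicitly.
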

\begin{proof}
We start from the following identity
\[
\omega_{2^n}^{2^j}
 = \left( \exp\left(\frac{-2\pi\I}{2^n}\right) \right)^{2^j}
 = \exp\left(\frac{-2\pi\I}{2^{n-j}}\right)
 = \omega_{2^{n-j}}.
\]
Hence,
\[
(R_n)^{2^j} =
\begin{bmatrix}
1 & \\ & \omega_{2^n}^{2^j}
\end{bmatrix} =
\begin{bmatrix}
1 & \\ & \omega_{2^{n-j}}
\end{bmatrix} = R_{n-j}.
\]
\end{proof}

We can use this result to decompose $\eOmega_n$ as a
Kronecker product of $n$ $R_i$ matrices.

%%% LEMMA %%%
\begin{lemma}\label{lemma:Omega}
Let $\bOmega_n$ be defined by \Cref{def:bOn}.
Then 
\begin{equation}\label{eq:kronfact}
\eOmega_n = R_2 \otimes R_3 \otimes \cdots \otimes R_n \otimes R_{n+1},
\end{equation}
where $R_n$ is defined by \Cref{def:Rn}.
\end{lemma}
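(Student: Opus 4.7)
The plan is to verify the identity entry-by-entry on the diagonal, since both sides are manifestly diagonal $2^n \times 2^n$ matrices. The $j$th diagonal entry of the left-hand side $\eOmega_n$ is, by \Cref{def:bOn}, simply $\omega_{2^{n+1}}^{j}$. For the right-hand side, I would invoke (an obvious diagonal analogue of) \Cref{lem:bin-vec}: if $j$ has binary expansion $[\ttj_1 \ttj_2 \cdots \ttj_n]$, then the $j$th diagonal entry of $R_2 \otimes R_3 \otimes \cdots \otimes R_{n+1}$ is the product of the $\ttj_i$th diagonal entries of the factors, namely
\[
\prod_{i=1}^{n} R_{i+1}(\ttj_i,\ttj_i) \;=\; \prod_{i=1}^{n} \omega_{2^{i+1}}^{\ttj_i},
\]
using that $R_k = \diag(1, \omega_{2^k})$ so $R_k(\ttj,\ttj) = \omega_{2^k}^{\ttj}$.

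Next I would rewrite each factor in terms of a common primitive root of unity $\omega_{2^{n+1}}$. From the identity used in the proof of \Cref{lem:Rn}, we have $\omega_{2^{i+1}} = \omega_{2^{n+1}}^{2^{n-i}}$, so $\omega_{2^{i+1}}^{\ttj_i} = \omega_{2^{n+1}}^{\ttj_i \cdot 2^{n-i}}$. Substituting and collecting exponents gives
\[
\prod_{i=1}^{n} \omega_{2^{n+1}}^{\ttj_i \cdot 2^{n-i}}
 \;=\; \omega_{2^{n+1}}^{\sum_{i=1}^{n} \ttj_i \cdot 2^{n-i}}
 \;=\; \omega_{2^{n+1}}^{j},
\]
where the last equality is the binary expansion of $j$ from \Cref{def:bin} (essentially \Cref{lem:bin-pow} applied to $\alpha = \omega_{2^{n+1}}$). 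This matches the $j$th diagonal entry of $\eOmega_n$, completing the proof.

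There is no real obstacle here: the argument is a direct computation once one recognizes that Kronecker products of diagonal matrices produce diagonals whose entries factor along the binary expansion of the index. The only mild care needed is bookkeeping the indices correctly (the $i$th tensor factor $R_{i+1}$ pairs with the $i$th bit $\ttj_i$ of $j$, and the exponent conversion $\omega_{2^{i+1}} \mapsto \omega_{2^{n+1}}^{2^{n-i}}$ must use the matching $n-i$ shift so that the exponents sum to $j$ rather than to the bit-reversed value).
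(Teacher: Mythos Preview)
Your proof is correct and is essentially the same argument as the paper's, just traversed in the opposite direction: the paper starts from $\eOmega_n(j,j)=\omega_{2^{n+1}}^{j}$, expands via \Cref{lem:bin-pow}, and then uses \Cref{lem:Rn} to identify each factor as $R_{i+1}(\ttj_i,\ttj_i)$ before invoking \Cref{lem:bin-vec}; you instead start from the Kronecker side and collapse the product of $\omega_{2^{i+1}}^{\ttj_i}$'s back to $\omega_{2^{n+1}}^{j}$ using the same ingredients.
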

\begin{proof}
From \Cref{lem:bin-pow}, we can use the binary representation of $j$ to
rewrite the $j$th diagonal element of $\eOmega_n$ as
\begin{equation}\label{eq:O-bin-pow}
\eOmega_{n}(j,j) = \omega_{2^{n+1}}^{j} 
= \omega_{2^{n+1}}^{\ttj_1 2^{n-1}} \omega_{2^{n+1}}^{\ttj_2 2^{n-2}} \cdots \omega_{2^{n+1}}^{\ttj_{n-1} 2^{1}} \omega_{2^{n+1}}^{\ttj_{n} 2^{0}},
\end{equation} 
for $j \inN : 0 \leq j \leq 2^{n}-1$.
We can rewrite \eqref{eq:O-bin-pow} as
\begin{equation}\label{eq:O-to-G}
\eOmega_{n}(j,j) = R_{n+1}^{2^{n-1}}(\ttj_1,\ttj_1) R_{n+1}^{2^{n-2}}(\ttj_2,\ttj_2) \cdots R_{n+1}^{2^1}(\ttj_{n-1},j_{n-1}) R_{n+1}^{2^0}(\ttj_{n},\ttj_{n}).
\end{equation}
It follow from \Cref{lem:Rn} that \eqref{eq:O-to-G} simplifies to
\begin{equation}
\label{eq:O-to-R}
\eOmega_{n}(j,j) = R_2(\ttj_1,\ttj_1) R_3(\ttj_2,\ttj_2) \cdots R_n(\ttj_{n-1},j_{n-1}) R_{n+1}(\ttj_{n},\ttj_{n}),
\end{equation}
such that \eqref{eq:kronfact} follows from \Cref{lem:bin-vec},
which extends trivially to diagonal matrices.
\end{proof}

Our objective is to decompose $\eeye[k] \oplus \eOmega_k$ into a product of
$k$ matrices, each of which can be written as the sum of Kronecker 
products of $2\times 2$ matrices.  The follow theorem and its proof
shows how this can be done.

%%% THEOREM %%%
\begin{theorem}\label{thm:r2-diagdecomp}
The diagonal operators $\eeye[k] \,\oplus\, \eOmega_k \inCC{2^{k+1}}$ from \Cref{thm:r2fft}
admit the decomposition
\begin{equation}\label{eq:dd}
\eeye[k] \oplus \eOmega_k
= \prod_{i=1}^{k} \Big[
      E_1 \otimes \eeye[i-1] \otimes \eye[2] \otimes \eeye[k-i] \ + \
      E_2 \otimes \eeye[i-1] \otimes R_{i+1} \otimes \eeye[k-i]\Big],
\end{equation}
where $R_i$ is defined in \Cref{def:Rn}.
\end{theorem}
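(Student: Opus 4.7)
The plan is to rewrite each factor of the product in the form of a direct sum, multiply those direct sums together, and recognize the result via \Cref{lemma:Omega}.

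First, I would apply the identity \eqref{eq:sumtokron} to each bracketed term on the right-hand side of \eqref{eq:dd}. Since $\eeye[i-1] \otimes \eye[2] \otimes \eeye[k-i] = \eeye[k]$, this shows that the $i$th factor equals
\[
T_i \;:=\; \eeye[k] \;\oplus\; D_i, \qquad D_i \;:=\; \eeye[i-1] \otimes R_{i+1} \otimes \eeye[k-i].
\]
In particular each $T_i$ is block diagonal with the same $2^k$-dimensional blocks, so the product telescopes cleanly: for any block-diagonal matrices $X \oplus Y$ and $X' \oplus Y'$ of matching block sizes, $(X\oplus Y)(X'\oplus Y') = (XX') \oplus (YY')$, hence
\[
\prod_{i=1}^k T_i \;=\; \eeye[k] \;\oplus\; \prod_{i=1}^{k} D_i.
\]

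Next I would simplify $\prod_{i=1}^k D_i$ using the mixed-product identity \eqref{eq:kron-mixedprod}. Each $D_i$ is a Kronecker product of $k$ factors, identity in every slot except slot $i$ which contains $R_{i+1}$. Multiplying $D_i D_{i+1}$ slot by slot leaves identities in all positions $\neq i,i+1$, puts $R_{i+1} \cdot \eye[2] = R_{i+1}$ in slot $i$, and $\eye[2] \cdot R_{i+2} = R_{i+2}$ in slot $i+1$. Iterating (or writing a one-line induction on $k$),
\[
\prod_{i=1}^{k} D_i \;=\; R_2 \otimes R_3 \otimes \cdots \otimes R_{k+1}.
\]

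Finally, \Cref{lemma:Omega} identifies this Kronecker product with $\eOmega_k$, so $\prod_{i=1}^k T_i = \eeye[k] \oplus \eOmega_k$, which is \eqref{eq:dd}. The only real bookkeeping step is the second one, namely verifying that the diagonal ``slot patterns'' of the $D_i$ interact correctly under matrix multiplication via the mixed-product rule; everything else is a direct appeal to identities already proved in the paper.
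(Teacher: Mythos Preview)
Your proof is correct and, in fact, a bit more streamlined than the paper's. Both arguments rely on the same two ingredients, namely the identity \eqref{eq:sumtokron} and \Cref{lemma:Omega}, but they deploy them in opposite directions. The paper starts from the left-hand side, writes $\eeye[k]\oplus\eOmega_k = E_1\otimes\eeye[k] + E_2\otimes\eOmega_k$, expands each summand redundantly as a $k$-fold product using $E_\ell^2=E_\ell$, and then observes that the resulting sum of two products equals the desired product of sums because all cross terms vanish via $E_1E_2=0$. You instead start from the right-hand side, use \eqref{eq:sumtokron} to recognize each factor as a direct sum $\eeye[k]\oplus D_i$, multiply block-diagonally, and collapse $\prod_i D_i$ with the mixed-product rule and \Cref{lemma:Omega}. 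Your route avoids the cross-term cancellation argument entirely and makes the block-diagonal structure do the work; the paper's route, on the other hand, generalizes a little more visibly to the radix-$d$ case in \Cref{thm:rd-diagdecomp}, where the analogous orthogonality $E_iE_j=0$ for $i\neq j$ is the natural mechanism.
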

%%% PROOF %%%
\begin{proof}
We start by splitting the diagonal matrix $\eeye[k] \oplus \eOmega_k$
into 2 terms using \eqref{eq:sumtokron}, which yields
\begin{equation}
\eeye[k] \oplus \eOmega_k = E_1 \otimes \eeye[k] + E_2 \otimes \eOmega_k.
\label{eq:lhsresult}
\end{equation}
Next, we rewrite these terms in a redundant form based on the mixed product
identity of Kronecker products \eqref{eq:kron-mixedprod}.
Using the property $E_1 = E_1^2$, we get for the first term
\begin{align*}
E_1 \otimes \eeye[k]
 &= E_1 \otimes  \eye[2] \otimes \cdots \otimes \eye[2],\\
 &= (E_1 \otimes \eye[2] \otimes \eye[2] \otimes \cdots \otimes \eye[2])
    (E_1 \otimes \eye[2] \otimes \eye[2] \otimes \cdots \otimes \eye[2])
    \cdots
    (E_1 \otimes \eye[2] \otimes \eye[2] \otimes \cdots \otimes \eye[2]),\\
 &= \prod_{i=1}^{k} E_1 \otimes \eeye[i-1] \otimes \eye[2] \otimes \eeye[k-i],
\end{align*}
and using \Cref{lemma:Omega} and the property $E_2 = E_2^2$, the second term
results in
\begin{align*}         
E_2 \otimes \eOmega_k
 &= E_2 \otimes R_2 \otimes \cdots \otimes R_{k+1},\\
 &= (E_2 \otimes R_2 \otimes \eye[2] \otimes \cdots \otimes \eye[2])
    (E_2 \otimes \eye[2] \otimes R_3 \otimes \cdots \otimes \eye[2])
    \cdots
    (E_2 \otimes \eye[2] \otimes \eye[2] \otimes \cdots \otimes R_{k+1}),\\
 &= \prod_{i=1}^{k} E_2 \otimes \eeye[i-1] \otimes R_{i+1} \otimes \eeye[k-i].             
\end{align*}
Hence, the sum of Kronecker products \eqref{eq:lhsresult} is equal to the
product of sums, i.e.,
\begin{align*}
\eeye[k] \oplus \eOmega_k
% &= E_1 \otimes  \eye[2] \otimes \cdots \otimes \eye[2] \
% +\ E_2 \otimes R_2 \otimes \cdots \otimes R_{k+1},\\
 &= \prod_{i=1}^{k} E_1 \otimes \eeye[i-1] \otimes \eye[2] \otimes \eeye[k-i] \
 +\ \prod_{i=1}^{k} E_2 \otimes \eeye[i-1] \otimes R_{i+1} \otimes \eeye[k-i],\\
 &= \prod_{i=1}^{k} \Big[
      E_1 \otimes \eeye[i-1] \otimes \eye[2] \otimes \eeye[k-i] \ + \
      E_2 \otimes \eeye[i-1] \otimes R_{i+1} \otimes \eeye[k-i]\Big],
\end{align*}
because the mixed terms in the latter expression cancel out due to the property
$E_1 E_2 = 0$.
\end{proof}

The order of the terms in the Kronecker product expression \eqref{eq:dd}
can be changed because of the specific form of the $R_{i+1}$ matrices.

\begin{lemma}\label{lem:reorder}
The Kronecker product matrices in \eqref{eq:dd} satisfy
\begin{alignat}{5}
&E_1 &&\otimes \eeye[i-1] \otimes \eye[2] && \otimes \eeye[k-i] \ + \
      E_2 &&\otimes \eeye[i-1] \otimes R_{i+1} && \otimes \eeye[k-i] \label{eq:noreorder}\\
= \
&\eye[2] &&\otimes \eeye[i-1] \otimes E_1 && \otimes \eeye[k-i] \ + \
      R_{i+1} && \otimes \eeye[i-1] \otimes  E_2 && \otimes \eeye[k-i],
\label{eq:reorder}      
\end{alignat}      
for $i=1,\hdots,k$.
\end{lemma}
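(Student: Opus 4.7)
Both expressions in \eqref{eq:noreorder} and \eqref{eq:reorder} are $2^{k+1} \times 2^{k+1}$ diagonal matrices, so the plan is to verify that their diagonal entries agree. First I would factor the trailing $\eeye[k-i]$ out of each of the four Kronecker-product terms; it appears identically on both sides and only adds free trailing bits, reducing the claim to the dimension-$2^{i+1}$ identity
\begin{equation*}
E_1 \otimes \eeye[i-1] \otimes \eye[2] \ +\ E_2 \otimes \eeye[i-1] \otimes R_{i+1}
\;=\;
\eye[2] \otimes \eeye[i-1] \otimes E_1 \ +\ R_{i+1} \otimes \eeye[i-1] \otimes E_2.
\end{equation*}

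Next I would use the binary representation $j = [\ttj_1 \ttj_2 \cdots \ttj_{i+1}]$ and the diagonal analogue of \Cref{lem:bin-vec}, which says that the $(j,j)$-entry of a Kronecker product of diagonal matrices is the product of their individual diagonal entries at the corresponding bit positions. On the LHS, the first term contributes $E_1(\ttj_1,\ttj_1)$, which equals $1$ when $\ttj_1=0$ and $0$ otherwise, while the second contributes $E_2(\ttj_1,\ttj_1)\,R_{i+1}(\ttj_{i+1},\ttj_{i+1})$; summing these, the LHS entry simplifies to $\omega_{2^{i+1}}^{\ttj_1 \ttj_{i+1}}$. The middle $\eeye[i-1]$ factor always contributes $1$ and plays no role. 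Evaluating the RHS by the same recipe swaps the roles of positions $1$ and $i+1$ in the computation, but because the exponent $\ttj_1 \ttj_{i+1}$ is symmetric in its two bits, the RHS entry is again $\omega_{2^{i+1}}^{\ttj_1 \ttj_{i+1}}$.

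There is no real obstacle here: at the matrix level this is the well-known fact that a controlled phase-shift gate is symmetric in its control and target qubits. The only care required is bookkeeping with the index positions $1$ and $i+1$ across the four Kronecker factors and correctly invoking \Cref{lem:bin-vec} for diagonal matrices, which is essentially immediate from \Cref{def:kronprod}.
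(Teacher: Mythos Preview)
Your proof is correct but follows a different route from the paper's. The paper works purely with Kronecker-product algebra: it expands $\eye[2] = E_1 + E_2$ and $R_{i+1} = E_1 + \omega_{2^{i+1}} E_2$ inside \eqref{eq:noreorder}, distributes to obtain four terms of the shape $E_a \otimes \eeye[i-1] \otimes E_b \otimes \eeye[k-i]$ (possibly scaled), shifts the scalar $\omega_{2^{i+1}}$ from the $(i+1)$st slot to the first slot via bilinearity \eqref{eq:kron-prop}, and regroups to recover \eqref{eq:reorder}. You instead drop to the level of diagonal entries and verify directly that both sides equal $\omega_{2^{i+1}}^{\ttj_1 \ttj_{i+1}}$ at index $[\ttj_1\cdots\ttj_{i+1}]$. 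Your argument is a bit shorter and makes the control/target symmetry explicit as commutativity of the integer product $\ttj_1\ttj_{i+1}$; the paper's argument never names an individual entry and stays at the matrix level, in keeping with the surrounding Kronecker-product narrative. Both approaches extend without change to the radix-$d$ analogue in \Cref{lem:dreorder}.
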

\begin{proof}
Splitting $\eye[2]$ and $R_{i+1}$ in their $E_1$ and $E_2$ components,
i.e.,
\begin{align*}
\eye[2] &= E_1 + \phantom{\omega_{2^{i+1}}} E_2,\\
R_{i+1} &= E_1 +          \omega_{2^{i+1}}  E_2,
\end{align*}
yields
\begin{align*}
E_1 \otimes \eeye[i-1] \otimes \eye[2] \ \otimes \eeye[k-i] \ + \
E_2 \otimes \eeye[i-1] \otimes R_{i+1} \otimes \eeye[k-i]
=&\,
      \begin{aligned}[t]
        &E_1 \otimes \eeye[i-1] \otimes E_1 \otimes \eeye[k-i] \ + \
         E_1 \otimes \eeye[i-1] \otimes \phantom{\omega_{2^{i+1}}} E_2
                                \otimes \eeye[k-i]\ + \\
        &E_2 \otimes \eeye[i-1] \otimes E_1 \otimes \eeye[k-i] \ + \
         E_2 \otimes \eeye[i-1] \otimes \omega_{2^{i+1}} E_2
                                \otimes \eeye[k-i],
      \end{aligned} \nonumber\\[5pt]
=&\,
      \begin{aligned}[t]
        &E_1 \otimes \eeye[i-1] \otimes E_1 \otimes \eeye[k-i] \hspace{-1em}
        && \ + \ \phantom{\omega_{2^{i+1}}} E_1 \otimes \eeye[i-1] \otimes E_2
                                           \otimes \eeye[k-i]\ + \\
        &E_2 \otimes \eeye[i-1] \otimes E_1 \otimes \eeye[k-i] \hspace{-1em}
        && \ + \ \omega_{2^{i+1}} E_2 \otimes \eeye[i-1] \otimes E_2
                                 \otimes \eeye[k-i],
      \end{aligned} \nonumber\\[5pt]
=&\,
      \begin{aligned}[t]
        &(E_1 + \phantom{\omega_{2^{i+1}}} E_2) \otimes
                \eeye[i-1] \otimes E_1 \otimes \eeye[k-i] \ + \\
        &(E_1 + \omega_{2^{i+1}} E_2) \otimes
                \eeye[i-1] \otimes E_2 \otimes \eeye[k-i],
      \end{aligned} \nonumber\\[5pt]
=&\,
  \begin{aligned}[t]
    &\eye[2] \otimes \eeye[i-1] \otimes E_1 \: \otimes \eeye[k-i] \ + \
     R_{i+1}  \otimes \eeye[i-1] \otimes  E_2 \otimes \eeye[k-i],
  \end{aligned}
\end{align*}
where we subsequently used the scalar shift and distributivity properties
\eqref{eq:kron-prop}, and combined the terms with $E_1$, respectively $E_2$,
in the $(i+1)$st position.
\end{proof}

Observe that \eqref{eq:dd} splits $\eeye[k] \oplus \eOmega_k$ into a product
of $k$ terms where each term has the form \eqref{eq:noreorder}.
\Cref{lem:reorder} allows us to modify each term in the product \eqref{eq:dd}
to an alternative representation~\eqref{eq:reorder}.
In what follows (\cref{sec:qgate}), we will see that every single term in
the product corresponds to a specific instance of an elementary operation on
a quantum computer and that the combination of
\Cref{thm:r2-diagdecomp,lem:reorder} leads to a new class of algorithms for
implementing $\eeye[k] \oplus \eOmega_k$.

% --- Connection with Quantum Computing --- %
\section{Quantum Fourier transform on a quantum computer}\label{sec:QC}
We now discuss how the decomposition shown in section~\ref{sec:qft}
enables the discrete Fourier transform to be computed efficiently on a quantum
computer.

% --- Rank-1 tensors --- %
\subsection{Quantum Fourier transform of rank-1 tensors on a classical computer}
\label{sec:rank1}
The Kronecker product form that appears in \eqref{eq:dd} allows for a
very efficient application to a rank-1 tensor of the form
\begin{equation}
\exx_n = x_1 \otimes x_2 \otimes \cdots \otimes x_n,
\label{eq:xrank1}
\end{equation}
where $x_i \inC[2] $ are vectors of length 2.
The multiplication of each factor in \eqref{eq:dd}
with \eqref{eq:xrank1} takes $\bigO(n)$ operations, whereas 
multiplying the diagonal matrix $\eeye[n] \oplus \eOmega_n$ with a vector of 
length $2^n$ takes $\bigO(2^n)$ operations.
Therefore, it is conceivable that the decomposition 
of the DFT matrix derived in the previous section may 
yield a more efficient algorithm for performing 
a discrete Fourier transform of a rank-1 tensor even 
on a classical computer. In this section, we show 
that this is not the case.  
However, in the next section, we will discuss a key
feature of a quantum computer that allows to take full
advantage of the Kronecker product structure of the 
decomposition given by \Cref{thm:r2-diagdecomp}, 
and yields an algorithm that only requires $\bigO(n^2)$ operations.

Recall from Theorem~\ref{thm:r2split} that $\eF_{n}^{\prime}$ can be 
decomposed as $\eF_{n}^{\prime} = (I_2 \otimes \eF_{n-1}^{\prime})(\eeye[n-1] \oplus \eOmega_{n-1})(H\otimes \eeye[n-1])$, where the factor
$\eeye[n-1] \oplus \eOmega_{n-1}$ can be further decomposed 
according to \eqref{eq:dd}.
The multiplication of the rightmost factor of $\eF_{n}^{\prime}$ with \eqref{eq:xrank1} yields
\[
(H\otimes \eeye[n-1]) \exx_n = (H x_1)\otimes x_2 \otimes \cdots \otimes x_n.
\]
Note that this multiplication only requires a constant number of
operations to produce $Hx_1$, which is independent of the problem size.
Furthermore, the result remains a rank-1 tensor. As long as we
keep the result in this rank-1 tensor product form, no additional
computation is required.

Next, we consider multiplying one factor of $\eeye[n-1] \oplus \eOmega_{n-1}$
derived in Theorem~\ref{thm:r2-diagdecomp} with a rank-1 tensor of the form \eqref{eq:xrank1}.
This amounts to a product of the matrix \eqref{eq:noreorder}, where $k=n-1$,
with the vector $\exx_n$, yielding
\begin{eqnarray}
\exy_n &=& \Big[
      E_1 \otimes \eeye[i-1] \otimes \eye[2] \otimes \eeye[n-i-1] \ + \
      E_2 \otimes \eeye[i-1] \otimes R_{i+1} \otimes \eeye[n-i-1]\Big] \exx_n \nonumber \\
&=& (E_1 x_1) \otimes x_2 \otimes \cdots \otimes x_n
+ (E_2 x_1) \otimes x_2 \otimes \cdots \otimes (R_{i+1} x_{i+1}) \otimes \cdots \otimes x_n. \label{eq:crx}
\end{eqnarray}
Thus, this multiplication also requires a constant number of operations.
However, the rank of the product is generally increased to 2,
unless one of the components of $x_1$ is zero.

If $\exy_n$ were to remain a rank-1 tensor after successive multiplications
of all factors of $\eeye[n-1] \oplus \eOmega_{n-1}$, the complexity of
multiplying the last two factors of $\eF_n^{\prime}$ with $\exx_n$ would be 
$\bigO(n)$.
Consequently, by applying this estimate recursively to $\eF_{k}^{\prime}$,
for $k = n-1,n-2,...,1$, the overall complexity of $\eF_n^{\prime}x$ calculation would be 
$\bigO(1+2+\cdots+n)=\bigO(n^2)$, even on a classical computer.

Unfortunately, as we can already see from \eqref{eq:crx}, that successive
applications of factors of the form 
     $ E_1 \otimes \eeye[i-1] \otimes \eye[2] \otimes \eeye[n-i] \ + \
       E_2 \otimes \eeye[i-1] \otimes R_{i+1} \otimes \eeye[n-i] $
for $i = 1,2,...,n$ tend to increase the rank of the tensor.
Nonetheless, as we saw in the proof of \Cref{thm:r2-diagdecomp}, the product
formulation \eqref{eq:dd} is a redundant representation where most terms 
cancel out.
If instead we decompose the diagonal matrix as
\begin{equation}
\eeye[n-1] \oplus \eOmega_{n-1} = E_1 \otimes \eye[2] \otimes \cdots \otimes \eye[2] + E_2 \otimes R_2 \otimes \cdots \otimes R_n,
\end{equation}
based on \eqref{eq:sumtokron} and \Cref{lemma:Omega}, then we get that the product of the diagonal
matrix with a rank-1 tensor is given by
\begin{equation}
(\eeye[n-1] \oplus \eOmega_{n-1}) \exx_n = (E_1 x_1) \otimes x_2 \otimes \cdots \otimes x_n + (E_2 x_1)\otimes (R_2 x_2) \otimes \cdots \otimes (R_n x_n).
\label{eq:drx}
\end{equation}
So we see that in general the rank increases to 2 after multiplication with
the diagonal matrix, just like after multiplication with a single term of the 
product representation \eqref{eq:crx}.

The redundant product representation \eqref{eq:dd} is more expensive to
evaluate classically than \eqref{eq:drx}, but will allow an efficient implementation on a quantum computer.

Although, in a classical algorithm, we can attempt to reduce the rank of each product before the 
multiplication with the next diagonal matrix is initiated, which is the approach
taken in ~\cite{Dolgov2012,Savostyanov2012} for a different tensor representation, 
it is, in general, not clear how much 
reduction can be achieved. Even if a rank reduction can be achieved,
it will not come free.  Consequently, the complexity of computing
the discrete Fourier transform on a classical computer through a sequence of
tensor products is higher than $\bigO(n^2)$ even if no explicit
sum is performed on the linear combination of rank-1 tensors, which requires 
a tremendous amount of storage for even a modest sized $n$, e.g., $n=100$.

\subsection{Qubits and quantum efficiency}

On a quantum computer, rank increase is not an issue.
A normalized vector of the corresponding tensor $\exx_n$ is 
kept as a quantum state, which can be viewed as a linear combination of 
a set of tensor product basis states, i.e.,
\begin{equation}
\exx_n =  \sum_{\ttj_1,\hdots,\ttj_n=1}^2 \alpha_{\ttj_1 \ttj_2 \hdots \ttj_n} \cdot \ej{1} \otimes \ej{2} \otimes \cdots \otimes \ej{n}.
\label{eq:ebasis}
\end{equation}
The $2^n$ coefficients $\alpha_{\ttj_1 \ttj_2 \hdots \ttj_n}$ of the expansion can be encoded and updated in $n$ quantum bits known 
as \textit{qubits}.
The basis vectors  $\ej{1} \otimes \ej{2} \otimes \cdots \otimes \ej{n}$ are the canonical basis of $\C[2^n]$.
For $n=1$, a normalized vector $\exx_1 \inC[2]$ can simply be 
written as $\exx_1 = \alpha e_1 + \beta e_2$ with $\alpha$ and $\beta$ satisfying
$|\alpha|^2 + |\beta|^2=1$. It follows from the superposition principle of quantum mechanics that a single \textit{qubit} can keep both the $\alpha$ and $\beta$ coefficients simultaneously.
For $n>1$, the $2^n$ coefficients satisfy the normalization condition
$ \sum_{\ttj_1,\hdots,\ttj_n=1}^2 |\alpha_{\ttj_{1} \ttj_{2} \hdots \ttj_{n}}|^2 = 1$ 
and they can be stored in the state of just $n$ qubits.
Observe that an $n$-qubit state \eqref{eq:ebasis} can be in a complete superposition, i.e., having all
coefficients $\alpha_{\ttj_1 \ttj_2 \hdots \ttj_n}$ different from zero, and still allow a data sparse
representation up to some finite precision in $\bigO(n)$ space in memory of a classical computer if it 
is a rank-1 tensor \eqref{eq:xrank1}.

An additional feature of a quantum computer is that it can store a tensor $\exx_n$ which has
tensor rank $r>1$ in only $n$ qubits, such a state is also known as an entangled state in
quantum mechanics.
The minimal representation of an entangled state,
\begin{equation}
\exx_n = \sum_{i=1}^r \alpha_{i} \cdot x\p{i}_1 \otimes x\p{i}_2 \otimes \cdots \otimes x\p{i}_n,
\label{eq:xtensorrank}
\end{equation}
where $x\p{i}_k \inC[2]$, $\| x\p{i}_k\|_2 = 1$ for $i=1,2,\hdots,r$,
$k = 1, 2, \hdots, n$,
$\sum_{i=1}^r |\alpha_i|^2 = 1$, and $r$ is an integer rank,
is a canonical polyadic decomposition of the $n$-way tensor $\exx_n$,
see~\Cref{def:v-rank1}.
Notice that if \eqref{eq:xtensorrank} is not a \emph{minimal} representation for $\exx_n$, then the
actual tensor rank can be strictly smaller than $r$.
Storing a tensor \eqref{eq:xtensorrank} up to finite precision requires $\bigO(nr)$ classical memory.
For $n = 2$, every tensor $\exx_2$ can be represented as a sum of two terms,
\begin{equation}
\exx_2 = \alpha_1 \cdot x\p1_1 \otimes x\p1_2 + \alpha_2 \cdot x\p2_1 \otimes x\p2_2,
\label{eq:svd}
\end{equation}
which is a singular value decomposition of a $2 \times 2$ matrix. 

How a quantum computer does this is beyond the scope of this paper, and not relevant for 
our discussion. 
It is also worth pointing out that, although a quantum
computer can store a quantum state \eqref{eq:xtensorrank}, it
cannot be accessed in the same way a register or memory is accessed on 
a classical computer. We will discuss the implication of this feature in terms
of how QFT can be used later.

The multiplication of a matrix operator 
with a vector $\exx_n$ of tensor rank $r$ \eqref{eq:xtensorrank} can be carried out by first multiplying the matrix with 
the tensor factors $x\p{i}_1 \otimes x\p{i}_2 \otimes \cdots \otimes x\p{i}_n$, $i=1,\hdots,r$.
When the matrix operation can be written in 
terms of Kronecker products
of $2\times 2$ matrices, the multiplication does not require a vector to be 
explicitly formed.  All we have to do is to multiply each $2\times 2$ matrix
in a Kronecker product with an appropriate component of each of tensor factors $x\p{i}_j$.
For example, applying the matrix in \eqref{eq:noreorder}, with $k=n-1$, to 
a tensor factor in \eqref{eq:xtensorrank} yields
\begin{equation}
(E_1 x\p{i}_1)  \otimes x\p{i}_2 \otimes \cdots \otimes x\p{i}_j \otimes x\p{i}_{j+1} \otimes x\p{i}_{j+2} \cdots \otimes x\p{i}_n 
\ + \ (E_2  x\p{i}_1) \otimes x\p{i}_2 \otimes \cdots \otimes x\p{i}_j \otimes (R_{j+1} x\p{i}_{j+1}) \otimes x\p{i}_{j+2} \otimes \cdots \otimes x\p{i}_n, 
\label{eq:cR}
\end{equation}
similar to the rank-1 case in \eqref{eq:crx}.
As the tensor rank representation \eqref{eq:xtensorrank} has a minimal number of terms by
definition, the multiplication of the matrix \eqref{eq:reorder} with $\exx_n$ requires classically at
least $r$ of the evaluations shown above for $i= 1,\hdots, r$. In essence this amounts to computing
$R_{j+1} x\p{i}_1$ for every tensor factor $x\p{i}_1$. 
These changes are managed by a quantum computer
instantaneously at a cost that is independent of $r$, even as the matrix operator would increase the rank $r$. 
No additional computation is required to collect or 
recompute the factors.

Because each factor in the matrix decomposition derived in Theorem~\ref{thm:r2-diagdecomp}
contains a sum of Kronecker products of $2\times 2$ identity matrices
with at most one non-identity matrix. The only operations we
need to account for are the multiplications of $R_i$'s and $H$ with 
tensor factors $x\p{i}_j$ and the total cost for updating the state tensor
\eqref{eq:xtensorrank} is the same for $r=1$ as it is for a full rank tensor.
This particular feature of a quantum computer 
results in quantum efficiency. It also means that the complexity 
of multiplying \eqref{eq:dd} with a vector of size $2^n$ 
can be evaluated in terms of the number of $R_i$'s in all of the factors, 
regardless how large the 
rank of the vector is when viewed as a $n$-dimensional tensor. Because
the total number of $R_i$'s is $\bigO(n^2)$, the QFT can achieve 
$\bigO(n^2)$ complexity overall.

\subsection{Quantum gates and quantum circuits} \label{sec:qgate}

Writing down and analyzing the QFT in terms of products of matrices in
the form of \eqref{eq:dd} can be 
cumbersome. In the quantum computing literature, there is a convenient 
graphical way to depict the unitary transformations on a single or an $n$-qubit system.
The building 
blocks of the transformation are $2 \times 2$ unitary matrices such as
the matrix $H$ in \eqref{er2fft2} or $R_{i+1}$ in \eqref{eq:dd}.
Each one of them is drawn
as a square box labelled by a letter and referred to as a quantum gate.
A single quantum gate is applied to a single qubit,
which is drawn as a line to the left of the gate.  
The result $\phi=H\psi$ is also drawn as a line to the right of the 
gate.  In quantum mechanics, $\psi$ and $\phi$ are often
written in Dirac's ket notation as shown in the diagram below
\[\vcenter{
\Qcircuit @C=1em @R=1em {
\lstick{\ket{\psi}} & \gate{H} & \qw & \ \ket{\phi}.
}}
\]

Successive applications of a sequence of $m$ $2\times 2$ unitaries
$U_1,U_2,\ldots,U_m$ to a
single qubit quantum state $\psi$, i.e., 
$\phi = U_m \cdots U_2 U_1 \psi$, can be drawn as
\[\vcenter{
\Qcircuit @C=1em @R=1em {
\lstick{\ket{\psi}} & \gate{U_1} & \gate{U_2} & \qw & \cdots & & \gate{U_m} &\qw & \ \ket{\phi}.
}}
\]
Such a diagram is often referred to as a quantum circuit~\cite{Deutsch1989}. Notice 
the first unitary to be applied, $U_1$, is the leftmost quantum gate in the
quantum circuit.

The application of a unitary in the form of $U \otimes I_2$ to 
a two qubit state $\exx_2$ can be drawn as 
\[\vcenter{
\Qcircuit @C=1em @R=1em {
\lstick{q_1} & \gate{U} & \qw  \\
\lstick{q_2} & \qw      & \qw 
}}
\]
The $q_1$ and $q_2$ symbols are used to label the qubits.
If the input to this circuit is a rank-1 tensor $\exx_2 = x_1 \otimes x_2$,
the output of the circuit is simply $ Ux_1 \otimes x_2$, and a classical simulation
of the circuit has the same cost as its executing on a quantum computer. 
Notice that the $2\times 2$ identity matrix does not require a quantum gate.
In general, if $\exx_2$ is a two qubit state of maximal rank 2, cfr. \eqref{eq:svd},
the circuit performs the computation
\[
\exy_2 = \alpha_1 \cdot (U x\p1_1) \otimes x\p1_2 + \alpha_2 \cdot (U x\p2_1) \otimes x\p2_2,
\]
which requires two products with $U$ to compute classically.
It follows that if an $n$-qubit state has rank $\bigO(2^n)$, the quantum efficiency of
quantum computers can lead to exponential speedups in comparison to classical computers.

This type of diagram can be easily generalized for unitary transformations
applied to state vectors encoded by several qubits.
In the context of the QFT, a particularly interesting unitary transformation
is the one represented by the matrices of \Cref{lem:reorder}. 
As we saw in \eqref{eq:cR}, the action on the $(j+1)$st tensor factor
$x\p{i}_{j+1}$ depends on the coefficients of the first tensor
factor, $E_1 x\p{i}_1$ and $E_2 x\p{i}_1$.
If the first tensor factor is equal to $e_1$, then the identity
matrix is applied to $x\p{i}_{j+1}$.
If $x\p{i}_1$ is equal to $e_2$, then the $(j+1)$st factor changes
to $R_{j+1} x\p{i}_{j+1}$.
Consequently, this type of transformation is called
a \textit{controlled} unitary with the first qubit as the controlling qubit and the $(j+1)$st qubit as the target qubit. 
In general, $x\p{i}_1$ can be in a superposition of $e_1$ and $e_2$ and the result is the
linear combination \eqref{eq:cR} which can increase the tensor rank as we discussed in \Cref{sec:rank1}.
The diagrammatic notation for this
controlled unitary is drawn on the left in the diagram below. Note that a solid circle is
drawn on the line representing the controlling qubit. 
It is connected to the qubit to be transformed via a vertical line.
\Cref{lem:reorder} shows the equivalence between the controlled unitaries on
the left and the right for controlled-$R$ gates.
\begin{equation*} \vcenter{
\Qcircuit @C=1em @R=1.4em {
   \lstick{q_1} & \ctrl{2}       &  \qw  &&&&& \lstick{q_1} &  \gate{R_3} &  \qw\\
   \lstick{q_2} &  \qw           &  \qw  &&  \raisebox{-2.5em}{=} &&&    \lstick{q_2} &  \qw           &  \qw\\
   \lstick{q_3} & \gate{R_3} & \qw   &&&&& \lstick{q_3} & \ctrl{-2} & \qw \\
   \lstick{q_4} &  \qw           & \qw   &&&&&    \lstick{q_4} &  \qw      & \qw}
}\end{equation*}
\vspace{0.5em} %otherwise the next paragraph is really close

Another useful controlled unitary is the \emph{controlled}-NOT or CNOT
 unitary. As a 2-qubit operator with the first qubit being the control and the second
 qubit being the target, it can be written as
\[
X_c =  E_1 \otimes \eye[2] + E_2 \otimes X,
\]
where $X$ is the NOT operator
\[
X = \begin{bmatrix} 0 & 1\\ 1 & 0 \end{bmatrix},
\]
that maps $e_1$ to $e_2$ and vice versa.
The conventional diagram for a CNOT gate in quantum computing is
\[
\Qcircuit @C =1em @R=1em{
& \ctrl{1} & \qw & & & & & \ctrl{1} & \qw\\
& \gate{X} & \qw & & \raisebox{2em}{=} & & & \targ & \qw
}
\raisebox{-1em}
{
}
\]
One particular use of the CNOT gate is to implement the SWAP operator
defined as
\[
\text{SWAP}( \psi_1 \otimes \psi_2) = \psi_2 \otimes \psi_1.
\]
This is sometimes represented by 
\[
\Qcircuit @C=1em @R=1.4em {
\lstick{q_1} & \qswap\qwx[1] & \qw & q_2\\
\lstick{q_2} & \qswap        & \qw & q_1
}
\raisebox{-2em}
{
}
\]
in a quantum circuit.  
It can be easily verified that a SWAP gate can be decomposed as the product
of three CNOTS with alternating controlling qubits. As a result,  
the CNOT implementation of a SWAP gate can be drawn as
\[
\Qcircuit @C =1em @R=1em{
& \qswap\qwx[1]  & \qw & & & & & \ctrl{1} & \targ & \ctrl{1} & \qw\\
& \qswap & \qw & & \raisebox{2em}{=} & & & \targ & \ctrl{-1} & \targ & \qw
}
\]

Controlled unitaries can also be defined to have multiple target qubits.
The matrix $E_1 \otimes \eeye[k] + E_2 \otimes \eU_k$ is a controlled-$\eU_k$
gate with the first qubit as a control and qubits $2, \hdots, k+1$ as target.
It follows from \eqref{eq:bitrev} that a bit reversal operator $\eP_n$ can 
be written in terms of a sequence of SWAP operations. Therefore, 
$\eP_n$ can be implemented using sequence of CNOT gates.

In general, a unitary transformation $\eU_n$ applied to an 
$n$-qubit state $\expsi_n$
can be drawn as 
\[
\Qcircuit @C=1em @R=1em {
\lstick{q_1} & \multigate{4}{\eU_n} & \qw \\
\lstick{q_2} & \ghost{\eU_n} & \qw \\
\lstick{\raisebox{.5em}{\vdots}} & \pureghost{\eU_n} &\\
\lstick{q_{n-1}} & \ghost{\eU_n} & \qw \\
\lstick{q_{n}}  & \ghost{\eU_n} & \qw 
}
\]
An alternative and simplified way to draw this transformation is
\[ \Qcircuit @C=1em @R=1em {
 \lstick{\ket{\expsi_n}} & \qw {/} & \gate{\eU_n} & \qw & \quad \ket{\exphi_n},
}
\]
where the line with a `\,/\,' through indicates a wire representing $n$ qubits.

\subsection{Quantum circuit for the quantum Fourier transform}\label{qubit}

We now show how the QFT matrix decomposition derived in \Cref{thm:r2fft} and \Cref{thm:r2-diagdecomp} can be expressed succinctly by using the 
quantum circuit diagrams introduced in \cref{sec:qgate}.
We start from \Cref{thm:r2fft} and write down the circuit
representation of the decomposition of the DFT matrix $\eF_n \inCC{2^n}$ 
as follows
\begin{equation}\label{cirq:qft1}
\vcenter{
\Qcircuit @C=1em @R=1em {
& & & & & & & & & \lstick{q_1} & \qw & \multigate{4}{\eB_{n}} & \qw   & \qw  & \cdots & & \qw & \qw &\multigate{4}{\eP_n} & \qw\\
& & & & & & & & & \lstick{q_2} & \qw & \ghost{\eB_n} & \multigate{3}{\eB_{n-1}} & \qw  & \cdots & & \qw & \qw & \ghost{\eP_n} & \qw\\
& \lstick{\ket{\expsi_n}} & \qw{/} & \gate{\eF_{n}} & \qw  & = & & & \raisebox{.5em}{\vdots} & & & \pureghost{\eB_n} & & & \raisebox{.5em}{\vdots} & & & & & \\
& & & & & & & & & \lstick{q_{n-1}} & \qw & \ghost{\eB_n} & \ghost{\eB_{n-1}} & \qw & \cdots &  & \multigate{1}{\eB_2} & \qw & \ghost{\eP_n} & \qw\\
& & & & & & & & & \lstick{q_n} & \qw & \ghost{\eB_n} & \ghost{\eB_{n-1}} & \qw & \cdots &  & \ghost{\eB_2} & \gate{\eB_1} & \ghost{\eP_n} & \qw
}}
\end{equation}
Note that the gate associated with $\eB_{k+1}$ is applied to the last $k+1$ qubits.

Each multi-qubit gate block of this quantum circuit can be decomposed into a sequence of single or two-qubit gates.
For example, the bit-reversal permutation matrix $\eP_n$ can first be
written in terms of $\lfloor n/2 \rfloor$ SWAP gates as follows
\begin{equation}\label{cirq:br}
\vcenter{
\Qcircuit @C = 1em @R = 1.4em {
& & & & & & & & & & \lstick{q_1} & \qswap & \qw & \qw & \cdots \\
& & & & & & & & & & \lstick{q_2} & \qw & \qswap & \qw & \cdots \\
& \lstick{\ket{\expsi_n}} & \qw{/} & \gate{\eP_{n}} & \qw  & = & & & & \raisebox{.5em}{\vdots} & & & & & \raisebox{.5em}{\vdots} \\
& & & & & & & & & & \lstick{q_{n-1}} & \qw & \qswap \qwx[-2] & \qw & \cdots\\
& & & & & & & & & & \lstick{q_n} & \qswap \qwx[-4] & \qw & \qw & \cdots
}}
\end{equation}
Since each SWAP can be implemented with 3 CNOT gates, $\eP_n$ requires $\lfloor 3n/2 \rfloor$ CNOT gates.

It follows from \eqref{er2fft2} that the $\eB_{k+1}$ gates can be decomposed 
further as
\begin{equation}\label{cirq:Bi}
\vcenter{
\Qcircuit @C = 1em @R = 1em {
& & & & & & & & & \lstick{q_1} & \gate{H} & \multigate{4}{\eeye[k] \oplus \eOmega_k} & \qw\\
& & & & & & & & & \lstick{q_2} & \qw & \ghost{\eeye[k] \oplus \eOmega_k}  & \qw\\
&  \lstick{\ket{\expsi_{k+1}}} & \qw{/} & \gate{\eB_{k+1}} & \qw  & = & &  &\raisebox{.5em}{\vdots}  & & & & \\
& & & & & & & & & \lstick{q_k} & \qw & \ghost{\eeye[k] \oplus \eOmega_k}  & \qw\\
& & & & & & & & & \lstick{q_{k+1}} & \qw & \ghost{\eeye[k] \oplus \eOmega_k}  & \qw\\
}}
\end{equation}

Furthermore, the decomposition given in \Cref{thm:r2-diagdecomp} can be used to 
rewrite the $\eeye[k] \oplus \eOmega_k$ block in
terms of controlled operations involving two qubits, as shown in the rightmost circuit below
\begin{equation}\label{cirq:D}
\vcenter{
\Qcircuit @C=1em @R=1em {
\lstick{q_1}  & \multigate{4}{\eeye[k] \oplus \eOmega_{k}} & \qw & & & & & \lstick{q_1} & \ctrl{1} & \qw &  & & & &  \lstick{q_1} & \control \ar @{-} +<0em,-1.05em> \qw & \qw & & & & & \lstick{q_1} & \ctrl{4} & \ctrl{3} & \qw & \cdots & & \ctrl{1} & \qw\\
\lstick{q_2} & \ghost{\eeye[k] \oplus \eOmega_{k}} & \qw& & & & & \lstick{q_2} & \multigate{3}{\eOmega_{k}} & \qw  & & & & & \lstick{q_2} & \gate{R_2} & \qw & & & & & \lstick{q_2} & \qw & \qw & \qw & \cdots & & \gate{R_2} & \qw \\
\lstick{\raisebox{.5em}{\vdots}} & \pureghost{\eye[k] \oplus \Omega_{k}} & & & = & & \raisebox{.5em}{\vdots}  & & & & &= & & & \lstick{\raisebox{.5em}{\vdots}} & \raisebox{.5em}{\vdots}  & & & = & & \raisebox{.5em}{\vdots} & & & & & \raisebox{.5em}{\vdots} & \\
\lstick{q_k} & \ghost{\eeye[k] \oplus \eOmega_{k}} & \qw & & & & &\lstick{q_k} &  \ghost{\eOmega_{k}} & \qw &  & & & & \lstick{q_k} & \gate{R_k} & \qw & & & & & \lstick{q_k} & \qw & \gate{R_k} & \qw & \cdots & & \qw & \qw\\
\lstick{q_{k+1}} & \ghost{\eeye[k] \oplus \eOmega_{k}} & \qw & & & & &\lstick{q_{k+1}} &  \ghost{\eOmega_{k}} & \qw &  & & & & \lstick{q_{k+1}} & \gate{R_{k+1}} & \qw & & & & & \lstick{q_{k+1}} & \gate{R_{k+1}} &\qw  & \qw & \cdots & & \qw & \qw
\gategroup{2}{16}{5}{16}{.7em}{-}
}}\hspace{-15pt}
\end{equation}
The circuit on the left is a dense $k+1$ qubit diagonal gate $\eeye[k] \oplus \eOmega_k$, 
which we can rewrite as a controlled-$\eOmega_k$
with the first qubit as control and the next $k$ qubits as target based on \eqref{eq:lhsresult}.
From \Cref{lemma:Omega} we have that $\eOmega_k$ can be decomposed as a Kronecker product of $R$ matrices,
which gives the third circuit above, where the control qubit controls the whole group $R_2 \otimes \cdots \otimes R_{k+1}$.
This circuit can evidently be expanded as a product of separate controlled-$R$ gates to obtain the rightmost circuit,
which gives a compact visual proof of \Cref{thm:r2-diagdecomp} only using circuit diagrams.
The decomposition of $\eeye[k] \oplus \eOmega_k$ in $k$ gates is exceptionally efficient, because the
implementation of an $n$-qubit
diagonal operator requires $\bigO(2^n)$ gates in general~\cite{Welch2015,Bullock2004}.

All the controlled-$R$ gates in this decomposition are diagonal matrices and commute so the order
in which they are applied does not change the outcome.
Furthermore, we have from \Cref{lem:reorder} that the roles of the control and target qubits can
be swapped without changing the result.
This leads us to a whole class of equivalent quantum circuits for $\eeye[k] \oplus \eOmega_k$.

Finally combining \eqref{cirq:qft1}--\eqref{cirq:D} gives the following complete QFT quantum circuit
\begin{equation}
\resizebox{.8\hsize}{!}{$
\vcenter{
\Qcircuit @C=1em @R=1em {
\lstick{q_1} & \gate{H} & \gate{R_{n}} & \gate{R_{n-1}} & \qw & \cdots & & \gate{R_2} & \qw & \qw & \qw &  \qw & \cdots & & \qw & \qw & \qw & \qw & \qswap & \qw & \qw \\
\lstick{q_2}  & \qw & \qw & \qw  & \qw  & \cdots & & \ctrl{-1} & \gate{H} & \gate{R_{n-1}} & \gate{R_{n-2}} & \qw & \cdots & & \qw & \qw & \qw & \qw & \qw & \qswap & \qw \\
\lstick{\raisebox{.5em}{\vdots}} & & & & & \raisebox{.5em}{\vdots} & & & & & & &  \raisebox{.5em}{\vdots}  \\
\lstick{q_{n-1}}  & \qw & \qw & \ctrl{-3} & \qw  & \cdots & & \qw & \qw & \qw & \ctrl{-2} & \qw & \cdots & &\gate{H} & \gate{R_2} & \qw & \qw & \qw & \qswap \qwx[-2] & \qw \\
\lstick{q_n}  & \qw & \ctrl{-4} & \qw & \qw & \cdots & & \qw & \qw & \ctrl{-3} & \qw & \qw & \cdots & & \qw & \ctrl{-1} & \gate{H} & \qw & \qswap \qwx[-4] & \qw & \qw
}}
$}\hspace{-15pt}
\end{equation}
In this QFT circuit, we swapped the control and target qubits compared to \Cref{thm:r2-diagdecomp} for all controlled-$R$ gates.
This particular choice gives the same QFT circuit that is discussed in \cite[section 5.1]{NC2010}.

The gate count of for each type of gate used in a QFT is summarized 
in \Cref{tab}.
The total number of elementary gates required to implement the QFT is $\bigO(n^2)$, which defines the complexity of the QFT algorithm. It represents an exponential improvement over the $\bigO(2^n n)$ complexity of the FFT algorithm.

\begin{table}[ht]
\centering
\caption{Gate count for the QFT circuit on $n$ qubits.}
\begin{tabular}[t]{lcc}
\toprule
Matrix & Count & Gate type\\
\midrule
$\eP_n$ & $\lfloor 3n/2 \rfloor$ & CNOT \\
\multirow{ 2}{*}{$\eA_{n}^{(0)} \cdots \eA_{n}^{(n-1)}$ } & $n$ & Hadamard\\
 & $n(n-1)/2$ & controlled-$R$\\
\bottomrule
\end{tabular}\label{tab}
\end{table}%

\section{Generalization to Radix-$d$ Quantum Fourier transform}\label{sec:qudit}

The QFT decomposition presented in section~\ref{sec:qft} can be easily 
generalized for DFT matrices of dimension $d^n \times d^n$ for any integer $d>2$.
The generalization relies on a radix-$d$ FFT decomposition and
the base-$d$ representation of an integer $j$ 
\[
j = [\ttj_1\ttj_2\cdots\ttj_{n-1}\ttj_n] = \ttj_1 \cdot d^{n-1} + \ttj_2 \cdot d^{n-2} + \cdots + \ttj_{n-1} \cdot d^1 + \ttj_n \cdot d^0,
\]
where $\ttj_i = \lbrace 0, 1, \hdots, d-1 \rbrace$ for $i=1,\hdots, n$.
Hence, \Cref{lem:bin-pow,lem:bin-vec} generalize in a straightforward manner
to base-$d$.

% --- Radix-d decomposition --- %
\subsection{Radix-$d$ decomposition of the discrete Fourier transform matrix}\label{rd}

In this section, we use $\eA_n = A_{d^n}$ to denote matrices of dimension $d^n$.
%%% DEFINITION %%%
\begin{definition}\label{def:bOnbd}
Define $\eOmega_n \inC[d^n][d^n], R_n \inC[d][d]$ as the following matrices:
\[
\eOmega_n = \Omega_{d^n} :=
\begin{bmatrix}
\omega_{d^{n+1}}^0 & & & \\
 & \omega_{d^{n+1}}^1 & & \\
 & & \ddots & \\
 & & & \omega_{d^{n+1}}^{d^n-1}
\end{bmatrix},
\qquad
R_n = 
\begin{bmatrix}
\omega_{d^n}^0 & & & \\
& \omega_{d^n}^1 & &\\
& & \ddots & \\
& & & \omega_{d^n}^{d-1}
\end{bmatrix},
\]
where $\omega_{d^i} = \exp(\frac{-2\pi\I}{d^i})$.
\end{definition}

If the DFT matrix is of dimension $d^n \times d^n$, then it can be factored
in $d \times d$ block partitioning, also known as the radix-$d$ factorization 
of the DFT matrix \cite{Sloate1974,Pease1968}.
This is summarized in the following two theorems which naturally extend the results
of the radix-$2$ framework of \cref{r2}.

%%% THEOREM %%%
\begin{theorem}[See \cite{Sloate1974}]\label{trdsplit}
Let $\eF_{n}^{\prime} = \eP_n \eF_n$ be the $d^n \times d^n$ base-$d$ reversed DFT
matrix. Then $\eF_{n}^{\prime}$ admits the following factorization:
\begin{equation}\label{eq:rdsplit}
\eF_{n}^{\prime} = (\eye[d] \otimes \eF_{n-1}^{\prime})(\eeye[n-1] \oplus \eOmega_{n-1} \oplus \cdots \oplus \eOmega_{n-1}^{d-1})(F_{d} \otimes \eeye[n-1]),
\end{equation}
where $\eOmega_{n-1}$ is given by \Cref{def:bOnbd} and $F_d$ is the $d \times d$ DFT matrix defined in \Cref{def:dft-matrix}.
\end{theorem}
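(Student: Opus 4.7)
The plan is to verify the factorization by entry-wise comparison after decomposing each index into its leading base-$d$ digit and a remainder. For $j \in \{0,\ldots,d^n-1\}$ with base-$d$ digits $[\ttj_1 \ttj_2 \cdots \ttj_n]$, write $j = a\,d^{n-1} + j'$ with $a = \ttj_1$ and $j' = [\ttj_2 \cdots \ttj_n] \in \{0,\ldots,d^{n-1}-1\}$, and analogously $k = c\,d^{n-1} + k'$. This splits every $d^n \times d^n$ matrix into a $d \times d$ array of $d^{n-1} \times d^{n-1}$ blocks indexed by $(a,c)$.

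First I would compute the $(a,c)$ block of the right-hand side. Block multiplication of the three factors gives
\[
\tfrac{1}{\sqrt{d}}\, \omega_d^{ac}\, \eF_{n-1}^{\prime}\, \eOmega_{n-1}^a,
\]
because $F_d \otimes \eeye[n-1]$ contributes the scalar $\tfrac{1}{\sqrt{d}}\omega_d^{bc}$ on its $(b,c)$ block, the middle direct sum picks out $\eOmega_{n-1}^a$ on the $a$-th diagonal block (with $\eOmega_{n-1}^0 = \eeye[n-1]$), and $\eye[d] \otimes \eF_{n-1}^{\prime}$ left-multiplies every block by $\eF_{n-1}^{\prime}$. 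Reading off the $(j',k')$ entry and inserting the definitions of $\eF_{n-1}^{\prime}$ and $\eOmega_{n-1}^a$ (whose $(k',k')$ diagonal entry is $\omega_{d^n}^{a k'}$) produces
\[
\tfrac{1}{\sqrt{d^n}}\, \omega_d^{ac}\, \omega_{d^{n-1}}^{\rho(j')\,k'}\, \omega_{d^n}^{a\,k'},
\]
where $\rho$ denotes base-$d$ digit reversal on the appropriate number of digits.

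Next I would evaluate the same entry of the left-hand side. Because $\eF_n^{\prime} = \eP_n \eF_n$, the entry equals $\tfrac{1}{\sqrt{d^n}}\omega_{d^n}^{\rho(j)\,k}$, and reversing the digits of $[a\,\ttj_2 \cdots \ttj_n]$ gives $[\ttj_n \cdots \ttj_2\, a] = d\,\rho(j') + a$. Expanding the product,
\[
\rho(j)\,k = d\,\rho(j')\,k' + a\,c\,d^{n-1} + a\,k' + c\,d^n\,\rho(j'),
\]
the last term vanishes modulo $d^n$ in the exponent, and the identities $\omega_{d^n}^d = \omega_{d^{n-1}}$ and $\omega_{d^n}^{d^{n-1}} = \omega_d$ collapse the remaining three terms to exactly the expression obtained for the right-hand side. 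Matching entries for every $(a,c,j',k')$ proves the factorization.

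The main obstacle is the base-$d$ digit bookkeeping: verifying that the cross-term $c\,d^n\,\rho(j')$ drops out in the exponent, and applying the root-of-unity reductions so that they land in exactly the right bases ($\omega_{d^{n-1}}$ for the inner DFT and $\omega_d$ for the twiddle). An induction on $n$ in the style of the proof of \Cref{thm:r2fft} is also available, but since its inductive step still relies on an analogous block decomposition of $\eF_n$, the direct entry-wise argument is the more economical route.
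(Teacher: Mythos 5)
Your proof is correct. Note that the paper itself never proves this theorem: it states it with a pointer to Sloate and Pease, remarks that it ``naturally extends'' the radix-2 framework, and only illustrates the underlying mechanism through the $8\times 8$ exponent-matrix computation of \Cref{DFT8}. Your entry-wise verification therefore supplies a self-contained argument where the paper defers to the literature, and every step checks out: with the paper's big-endian convention and \Cref{def:dPerm}, one indeed has $(\eF_n^{\prime})_{jk} = \tfrac{1}{\sqrt{d^n}}\,\omega_{d^n}^{\rho(j)k}$ since digit reversal is an involution; the $(a,c)$ block of the right-hand side is $\tfrac{1}{\sqrt{d}}\,\omega_d^{ac}\,\eF_{n-1}^{\prime}\,\eOmega_{n-1}^{a}$ because the middle direct sum contributes $\eOmega_{n-1}^{a}$ (with $\eOmega_{n-1}^{0}=\eeye[n-1]$) and $(\eOmega_{n-1}^{a})_{k'k'}=\omega_{d^{n}}^{ak'}$ by \Cref{def:bOnbd}; and the digit identity $\rho_n(j)=d\,\rho_{n-1}(j')+a$ together with $\omega_{d^n}^{d}=\omega_{d^{n-1}}$, $\omega_{d^n}^{d^{n-1}}=\omega_d$, and the vanishing of the term $c\,d^n\rho_{n-1}(j')$ in the exponent modulo $d^n$ makes the two sides agree for every $(a,c,j',k')$. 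Your closing remark is also apt: the induction used for \Cref{thm:r2fft} (and \Cref{trdfft}) takes the splitting theorem as its input, so it cannot replace this verification, and the direct index-splitting argument is the economical way to establish it. What the paper's presentation buys is brevity and focus on the subsequent Kronecker-product decomposition of the diagonal factor; what your argument buys is a complete proof that makes precise the pattern the paper only exhibits for $d=2$, $n=3$.
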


\Cref{eq:rdsplit} essentially decomposes $\eF_n^{\prime}$ into the Kronecker product of the $d^{n-1} \times d^{n-1}$ 
DFT matrix $\eF_{n-1}^{\prime}$ and a $d \times d$
block matrix with block size $d^{n-1} \times d^{n-1}$.
The full radix-$d$ factorization of the DFT matrix follows from \Cref{trdsplit}
via the same simple induction argument used in the proof of \Cref{thm:r2fft}.

%%% THEOREM %%%
\begin{theorem}\label{trdfft}
The $d^n \times d^n$ DFT matrix $\eF_n$ can be factored as:
\begin{equation}\label{erdfft}
\eF_{n} = \eP_n \eF_{n}^{\prime} = \eP_n \eA^{(0)}_n \hdots \eA^{(n-1)}_{n},
\end{equation}
where, for $k=0, \hdots, n-1$,
\begin{equation}\label{erdfft2}
\eA^{(k)}_{n} = \eeye[n-k-1] \, \otimes \eB_{k+1},
\qquad \text{and} \qquad
\eB_{k+1}
= (\eeye[k] \oplus \eOmega_k \oplus \cdots \oplus \eOmega_{k}^{d-1})(\eF_1 \otimes \eeye[k]).
\end{equation}
with $\eOmega_{k}$ defined in \Cref{def:bOnbd} and $\eF_1 = F_d$ the $d \times d$ DFT matrix given by \Cref{def:dft-matrix}.
\end{theorem}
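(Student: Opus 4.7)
The plan is to imitate the induction argument used in the proof of \Cref{thm:r2fft}, replacing the radix-2 splitting of \Cref{thm:r2split} with its radix-$d$ analogue \Cref{trdsplit}. Since the permutation $\eP_n$ is already factored out (the theorem asserts the decomposition of $\eF_n^{\prime}$, with $\eF_n = \eP_n \eF_n^{\prime}$ following from the definition of the base-$d$ reversal), it suffices to prove $\eF_n^{\prime} = \eA_n^{(0)} \eA_n^{(1)} \cdots \eA_n^{(n-1)}$ by induction on $n$.

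For the base case $n=1$, I would check directly that the product on the right has a single factor, $\eA_1^{(0)} = \eeye[0] \otimes \eB_1$, and from \eqref{erdfft2} with $k=0$ the matrix $\eB_1$ reduces to the empty direct sum times $(\eF_1 \otimes \eeye[0]) = F_d$, so $\eA_1^{(0)} = F_d = \eF_1^{\prime}$ (no permutation on a single digit). For the inductive step, assume $\eF_n^{\prime} = \eA_n^{(0)} \cdots \eA_n^{(n-1)}$ and apply \Cref{trdsplit} to $\eF_{n+1}^{\prime}$:
\begin{align*}
\eF_{n+1}^{\prime}
 &= (\eye[d] \otimes \eF_n^{\prime})\bigl(\eeye[n] \oplus \eOmega_n \oplus \cdots \oplus \eOmega_n^{d-1}\bigr)(F_d \otimes \eeye[n]) \\
 &= \bigl(\eye[d] \otimes \eA_n^{(0)} \cdots \eA_n^{(n-1)}\bigr)\,\eB_{n+1},
\end{align*}
where I recognize the product of the last two factors as $\eB_{n+1}$ using \eqref{erdfft2} with $k=n$, and $\eA_{n+1}^{(n)} = \eeye[0] \otimes \eB_{n+1} = \eB_{n+1}$.

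Next I would distribute the $\eye[d] \otimes (\cdot)$ across the product of $\eA_n^{(k)}$ factors using the mixed-product identity \eqref{eq:kron-mixedprod}, yielding $\prod_{k=0}^{n-1}(\eye[d] \otimes \eA_n^{(k)})$. The key identity to verify is $\eye[d] \otimes \eA_n^{(k)} = \eA_{n+1}^{(k)}$, which follows from $\eA_n^{(k)} = \eeye[n-k-1] \otimes \eB_{k+1}$ and the associativity of the Kronecker product: $\eye[d] \otimes \eeye[n-k-1] \otimes \eB_{k+1} = \eeye[n-k] \otimes \eB_{k+1} = \eA_{n+1}^{(k)}$. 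Concatenating with the trailing $\eA_{n+1}^{(n)}$ completes the inductive step.

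There is no serious obstacle here; the radix-$d$ structure of \Cref{trdsplit} is engineered precisely so this induction goes through unchanged from the radix-2 case. The only bookkeeping point worth double-checking is that the middle direct sum $\eeye[n] \oplus \eOmega_n \oplus \cdots \oplus \eOmega_n^{d-1}$ composed with $(F_d \otimes \eeye[n])$ matches the definition of $\eB_{n+1}$ in \eqref{erdfft2} exactly, and that the base case indexing $k=0$, $n=1$ produces $\eB_1 = F_d$ as expected.
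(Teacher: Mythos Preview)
Your proposal is correct and matches the paper's approach exactly: the paper does not even spell out a separate proof for this theorem, stating only that it ``follows from \Cref{trdsplit} via the same simple induction argument used in the proof of \Cref{thm:r2fft}.'' Your write-up is precisely that induction, with the key identity $\eye[d] \otimes \eA_n^{(k)} = \eA_{n+1}^{(k)}$ and the base case $\eB_1 = F_d$ handled correctly.
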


\begin{definition}\label{def:dPerm}
The base-$d$ reversal permutation matrix $\eP_n \inC[d^n][d^n]$
is defined as the permutation matrix which satisfies:
\[\eP_n (v_1 \otimes \cdots \otimes v_n) = v_n \otimes \cdots \otimes v_1,\]
where $v_k \inC[d]$, for $k=1,\hdots,n$.
\end{definition}

\Cref{def:dPerm} shows that the permutation matrix can be implemented
in $\lfloor n/2 \rfloor$ SWAPs of $d$ dimensional vectors in 
a Kronecker product of $n$ such vectors.

% --- Radix-d / qudit case --- %
\subsection{Decomposition of the diagonal matrix}

In this section we decompose the diagonal matrix
$\eeye[k] \oplus \eOmega_k \oplus \cdots \oplus \eOmega_{k}^{d-1}$
into $k$ controlled-$R$ unitaries, with $R$ defined in \Cref{def:bOnbd},
following an analogous strategy to \cref{sec:qft}.
We start again with a decomposition of $\eOmega_n$
as a Kronecker product of $d \times d$ $R_i$ matrices.

%%% LEMMA %%%
\begin{lemma}
\label{lem:Rnbd}
Let $R_n$ be defined by \Cref{def:bOnbd}. Then
\[
R_n^{d^j} = R_{n-j},
\]
for $j \inN$.
\end{lemma}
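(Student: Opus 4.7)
The plan is to mimic the proof of Lemma~\ref{lem:Rn} exactly, since the only change is replacing the base $2$ by a general base $d$ and allowing the diagonal entries to range over $k = 0, 1, \ldots, d-1$ instead of just $\{0, 1\}$.

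First I would establish the scalar identity for the root of unity: using the definition $\omega_{d^n} = \exp(-2\pi\I/d^n)$, I would compute
\[
\omega_{d^n}^{d^j} \;=\; \left(\exp\!\left(\tfrac{-2\pi\I}{d^n}\right)\right)^{d^j}
\;=\; \exp\!\left(\tfrac{-2\pi\I \, d^j}{d^n}\right)
\;=\; \exp\!\left(\tfrac{-2\pi\I}{d^{n-j}}\right)
\;=\; \omega_{d^{n-j}}.
\]

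Next, because $R_n$ is diagonal, raising it to the $d^j$th power acts entrywise. The $k$th diagonal entry of $R_n^{d^j}$ is $\omega_{d^n}^{k d^j} = (\omega_{d^n}^{d^j})^k$, which by the scalar identity above equals $\omega_{d^{n-j}}^k$. Collecting these entries for $k = 0, 1, \ldots, d-1$ gives precisely $R_{n-j}$ as defined in~\Cref{def:bOnbd}, yielding
\[
R_n^{d^j} \;=\; \operatorname{diag}\!\left(\omega_{d^{n-j}}^0,\, \omega_{d^{n-j}}^1,\, \ldots,\, \omega_{d^{n-j}}^{d-1}\right) \;=\; R_{n-j}.
\]

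There is no real obstacle here — the argument is a direct generalization of \Cref{lem:Rn} that only requires exponent arithmetic on the primitive roots of unity together with the fact that powers of a diagonal matrix act on each diagonal entry independently. The one thing worth double-checking is that the convention $\omega_{d^i} = \exp(-2\pi\I/d^i)$ in \Cref{def:bOnbd} matches the usage in the lemma, so that $\omega_{d^n}^{d^j}$ really does collapse to $\omega_{d^{n-j}}$ without any extra sign or normalization; a quick appeal to the definition suffices.
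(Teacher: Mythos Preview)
Your proposal is correct and is exactly the approach the paper intends: the paper's proof of \Cref{lem:Rnbd} simply says ``The proof proceeds analogously to \Cref{lem:Rn},'' and your writeup carries out precisely that analogy, replacing $2$ by $d$ and checking the diagonal entries for $k=0,\ldots,d-1$.
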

\begin{proof}
The proof proceeds analogously to \Cref{lem:Rn}.
\end{proof}
%%% LEMMA %%%
\begin{lemma}\label{lemma:Omegabd}
Let $\eOmega_n$ and $R_n$ be defined by \Cref{def:bOnbd}.
Then
\[
\eOmega_n = R_2 \otimes R_3 \otimes \cdots \otimes R_n \otimes R_{n+1}.
\]
\end{lemma}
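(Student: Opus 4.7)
The plan is to mirror, step by step, the proof of the radix-2 analog (\Cref{lemma:Omega}), relying on the base-$d$ generalizations of \Cref{lem:bin-pow,lem:bin-vec} that the authors indicated go through in a straightforward manner, together with \Cref{lem:Rnbd}.

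First, I would write out the $j$th diagonal entry of $\eOmega_n$ using the base-$d$ representation $j = \ttj_1 d^{n-1} + \ttj_2 d^{n-2} + \cdots + \ttj_n d^0$. The base-$d$ version of \Cref{lem:bin-pow} immediately yields
\[
\eOmega_n(j,j) \;=\; \omega_{d^{n+1}}^{j} \;=\; \prod_{k=1}^{n} \omega_{d^{n+1}}^{\ttj_k\, d^{n-k}}.
\]

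Next, I would recognize each scalar factor as a diagonal entry of an appropriate power of $R_{n+1}$. By \Cref{def:bOnbd}, the $(\ttj_k,\ttj_k)$ entry of $R_{n+1}^{d^{n-k}}$ is exactly $\omega_{d^{n+1}}^{\ttj_k d^{n-k}}$. Then \Cref{lem:Rnbd} collapses this to $R_{n+1}^{d^{n-k}} = R_{k+1}$, giving
\[
\eOmega_n(j,j) \;=\; R_2(\ttj_1,\ttj_1)\, R_3(\ttj_2,\ttj_2) \,\cdots\, R_{n+1}(\ttj_n,\ttj_n).
\]

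Finally, I would invoke the base-$d$ generalization of \Cref{lem:bin-vec}, applied to diagonal matrices (i.e., the $j$th diagonal entry of a Kronecker product of diagonal matrices is the product of the corresponding indexed diagonal entries of the factors, indexed by the digits of $j$). Since the identity holds entrywise on the diagonal for every $0 \le j \le d^n - 1$, and both sides are diagonal, we conclude $\eOmega_n = R_2 \otimes R_3 \otimes \cdots \otimes R_{n+1}$.

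I do not anticipate any real obstacle: this is a verbatim translation of the binary proof, with $\{0,1\}$ replaced by $\{0,1,\ldots,d-1\}$ and $2\times 2$ matrices replaced by $d\times d$ matrices. The only item to double-check is the implicit extension of \Cref{lem:bin-vec} to the diagonal case in base $d$, but this follows immediately from \Cref{def:kronprod}: a Kronecker product of diagonal matrices is diagonal, and its $j$th diagonal entry is the product of the digit-indexed diagonal entries of the factors.
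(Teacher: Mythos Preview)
Your proposal is correct and follows exactly the approach the paper intends: the paper's own proof simply states that it ``proceeds analogously to \Cref{lemma:Omega}, replacing the binary representation with base-$d$ representation and using \Cref{lem:Rnbd},'' and you have spelled out precisely those steps.
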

\begin{proof}
The proof proceeds analogously to \Cref{lemma:Omega}, replacing 
the binary representation with base-$d$ representation and using \Cref{lem:Rnbd}.
\end{proof}

The generalization of \Cref{thm:r2-diagdecomp} to the decomposition of 
the matrix
$\eeye[k] \oplus \eOmega_k \oplus \cdots \oplus \eOmega_{k}^{d-1}$ can 
be stated as follows.

%%% THEOREM %%%
\begin{theorem}\label{thm:rd-diagdecomp}
The diagonal operators $\eeye[k] \oplus \eOmega_k \oplus \cdots \oplus \eOmega_{k}^{d-1} \inCC{d^{k+1}}$ from \Cref{trdfft}
admit the decomposition:
\begin{equation}\label{eq:dDDecomp}
\eeye[k] \oplus \eOmega_k \oplus \cdots \oplus \eOmega_{k}^{d-1} \ = \
\prod_{i=1}^{k}
\sum_{\ell=1}^{d}
E_\ell \otimes \eeye[i-1] \otimes R_{i+1}^{\ell-1} \otimes \eeye[k-i]
\end{equation}
with $E_\ell := e_\ell e_\ell\T \inC[d][d]$ and $R_{i+1}$ defined in \Cref{def:bOnbd}.
\end{theorem}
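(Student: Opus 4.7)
The plan is to mimic the proof of \Cref{thm:r2-diagdecomp}, replacing the two projectors $E_1,E_2$ by the $d$ rank-one projectors $E_\ell := e_\ell e_\ell\T$, $\ell = 1,\ldots,d$, which satisfy the orthogonality relations $E_\ell E_{\ell'} = \delta_{\ell\ell'} E_\ell$ and the resolution of identity $\sum_{\ell=1}^d E_\ell = \eye[d]$. First I would generalize \eqref{eq:sumtokron} to a base-$d$ direct sum, namely
\[
\eeye[k] \oplus \eOmega_k \oplus \cdots \oplus \eOmega_k^{d-1}
 \;=\; \sum_{\ell=1}^d E_\ell \otimes \eOmega_k^{\ell-1},
\]
with the convention $\eOmega_k^{0} = \eeye[k]$. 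This is immediate from the block-diagonal form of the left hand side.

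Next, I would feed in the Kronecker factorization of $\eOmega_k$ from \Cref{lemma:Omegabd}. Applying the mixed-product identity~\eqref{eq:kron-mixedprod} coordinate-wise gives
\[
\eOmega_k^{\ell-1} \;=\; R_2^{\ell-1} \otimes R_3^{\ell-1} \otimes \cdots \otimes R_{k+1}^{\ell-1},
\]
and then, using the idempotency $E_\ell = E_\ell^k$ together with \eqref{eq:kron-mixedprod} exactly as in the proof of \Cref{thm:r2-diagdecomp}, each summand can be telescoped into a length-$k$ product of single-site Kronecker factors:
\[
E_\ell \otimes \eOmega_k^{\ell-1}
 \;=\; \prod_{i=1}^{k} \bigl( E_\ell \otimes \eeye[i-1] \otimes R_{i+1}^{\ell-1} \otimes \eeye[k-i] \bigr).
\]

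The last step is to swap the sum and the product. When we expand
\[
\prod_{i=1}^{k} \sum_{\ell=1}^d \bigl( E_\ell \otimes \eeye[i-1] \otimes R_{i+1}^{\ell-1} \otimes \eeye[k-i] \bigr)
\]
into a sum over tuples $(\ell_1,\ldots,\ell_k) \in \{1,\ldots,d\}^k$, the mixed-product identity concentrates all of the $E_{\ell_i}$ onto the first tensor slot, producing the factor $E_{\ell_1} E_{\ell_2} \cdots E_{\ell_k}$. By orthogonality this vanishes unless $\ell_1 = \ell_2 = \cdots = \ell_k =: \ell$, in which case it reduces to $E_\ell$; thus only the $d$ "diagonal" tuples survive and the expansion collapses to $\sum_{\ell=1}^d \prod_{i=1}^k ( E_\ell \otimes \eeye[i-1] \otimes R_{i+1}^{\ell-1} \otimes \eeye[k-i])$, matching the expression obtained in the previous step. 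The main subtlety, and the only place the radix-$d$ case differs nontrivially from the radix-$2$ proof, is precisely this orthogonality bookkeeping: one has to check that every off-diagonal tuple gets annihilated by the product $E_{\ell_1}\cdots E_{\ell_k}$, which is where the hypothesis $E_\ell E_{\ell'} = \delta_{\ell\ell'} E_\ell$ does all the work.
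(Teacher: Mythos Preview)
Your proposal is correct and follows essentially the same route as the paper's own proof: decompose the direct sum as $\sum_{\ell} E_\ell \otimes \eOmega_k^{\ell-1}$, apply \Cref{lemma:Omegabd} and the idempotency of the $E_\ell$ to write each summand as a length-$k$ product, and then interchange sum and product using $E_\ell E_{\ell'} = 0$ for $\ell\neq\ell'$. Your treatment of the last step is actually slightly more explicit than the paper's, which simply states that ``the mixed terms in the latter expression cancel out due to the property $E_i E_j = 0$ for $i \neq j$.''
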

%%% PROOF %%%
\begin{proof}
We start by splitting the direct sum of $d$ diagonal matrices as a sum of Kronecker
products, which yields
\begin{equation}\label{eq:dsumtokron}
\eeye[k] \oplus \eOmega_k \oplus \cdots \oplus \eOmega_{k}^{d-1}
 = E_1 \otimes \eeye[k] + E_2 \otimes \eOmega_k + \cdots + E_d \otimes \eOmega_{k}^{d-1},
\end{equation}
where we used a generalization of \eqref{eq:sumtokron}.
Next, we rewrite the $\ell$th term on the right-hand side in a redundant form based on the mixed
product identity of Kronecker product \eqref{eq:kron-mixedprod}.
Using \Cref{lemma:Omegabd} and the property $E_\ell = E_\ell^2$, we get
\begin{align*}
E_\ell \otimes \eOmega_k^{\ell-1}
 &= E_\ell \otimes  R_2^{\ell-1} \otimes \cdots \otimes R_{k+1}^{\ell-1},\\
 &= (E_\ell \otimes R_2^{\ell-1} \otimes \eye[2] \otimes \cdots \otimes \eye[2])
    (E_\ell \otimes \eye[2] \otimes R_3^{\ell-1} \otimes \cdots \otimes \eye[2])
    \cdots
    (E_\ell \otimes \eye[2] \otimes \eye[2] \otimes \cdots \otimes R_{k+1}^{\ell-1}),\\
 &= \prod_{i=1}^{k} E_\ell \otimes \eeye[i-1] \otimes R_{i+1}^{\ell-1} \otimes \eeye[k-i].
\end{align*}
Hence, the sum of Kronecker products \eqref{eq:dsumtokron} is equal
to the product of sums, i.e.,
\begin{equation*}
\eeye[k] \oplus \eOmega_k \oplus \cdots \oplus \eOmega_{k}^{d-1} 
=
\sum_{\ell=1}^{d}
\prod_{i=1}^{k}
E_\ell \otimes \eeye[i-1] \otimes R_{i+1}^{\ell-1} \otimes \eeye[k-i]
=
\prod_{i=1}^{k}
\sum_{\ell=1}^{d}
E_\ell \otimes \eeye[i-1] \otimes R_{i+1}^{\ell-1} \otimes \eeye[k-i]
\end{equation*}
because the mixed terms in the latter expression cancel out due to the
property $E_i E_j = 0$ for $i \neq j$.
\end{proof}

The position of the $R_{i+1}$ and $E_\ell$ matrices in the Kronecker
product expression can again be swapped.

%%% LEMMA %%%
\begin{lemma}\label{lem:dreorder}
The Kronecker product matrices in \eqref{eq:dDDecomp} satisfy
\begin{equation}
\sum_{\ell=1}^{d}
E_\ell \otimes \eeye[i-1] \otimes R_{i+1}^{\ell-1} \otimes \eeye[k-i]
\ = \
\sum_{\ell=1}^{d}
R_{i+1}^{\ell-1} \otimes \eeye[i-1] \otimes E_\ell  \otimes \eeye[k-i],
\end{equation}
for $i=1,\hdots,k$.
\end{lemma}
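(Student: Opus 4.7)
The plan is to mirror the proof of \Cref{lem:reorder} but using a diagonal-projector expansion of every power $R_{i+1}^{\ell-1}$ instead of only $R_{i+1}$. The key observation is that $R_{i+1}$ (as defined in \Cref{def:bOnbd}) is diagonal, so for every $\ell \in \{1,\dots,d\}$ we can write
\[
R_{i+1}^{\ell-1} = \sum_{m=1}^{d} \omega_{d^{i+1}}^{(m-1)(\ell-1)}\, E_m,
\]
where $E_m = e_m e_m\T$ are the rank-$1$ projectors introduced in \Cref{thm:rd-diagdecomp}. This replaces the two-term splitting $\eye[2]=E_1+E_2$ and $R_{i+1}=E_1+\omega_{2^{i+1}}E_2$ used in the radix-$2$ argument.

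First I would substitute this expansion into the left-hand side and use the bilinearity of the Kronecker product \eqref{eq:kron-prop} to turn it into a double sum
\[
\sum_{\ell=1}^{d} E_\ell \otimes \eeye[i-1] \otimes R_{i+1}^{\ell-1} \otimes \eeye[k-i]
= \sum_{\ell=1}^{d}\sum_{m=1}^{d} \omega_{d^{i+1}}^{(m-1)(\ell-1)}\, E_\ell \otimes \eeye[i-1] \otimes E_m \otimes \eeye[k-i].
\]
Applying the analogous expansion to the right-hand side (with $R_{i+1}^{\ell-1}$ now sitting in the first slot) produces
\[
\sum_{\ell=1}^{d} R_{i+1}^{\ell-1} \otimes \eeye[i-1] \otimes E_\ell \otimes \eeye[k-i]
= \sum_{\ell=1}^{d}\sum_{m=1}^{d} \omega_{d^{i+1}}^{(m-1)(\ell-1)}\, E_m \otimes \eeye[i-1] \otimes E_\ell \otimes \eeye[k-i].
\]

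Finally I would relabel the summation indices $\ell \leftrightarrow m$ in the last expression. The scalar coefficient is symmetric in the two indices because the exponent $(m-1)(\ell-1)$ is symmetric under swapping $\ell$ and $m$, so the two double sums coincide term by term, which establishes the claimed identity. I do not anticipate any real obstacle; the only point to watch is that the commutativity used to conclude the proof lives in the exponent of $\omega_{d^{i+1}}$ and relies on $R_{i+1}$ being diagonal, which is precisely what allows the diagonal expansion in the first step and makes the argument an essentially index-level generalization of \Cref{lem:reorder}.
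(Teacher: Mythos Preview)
Your proposal is correct and follows essentially the same approach as the paper: both expand $R_{i+1}^{\ell-1}$ via its diagonal projector decomposition $R_{i+1}^{\ell-1}=\sum_{m=1}^{d}\omega_{d^{i+1}}^{(m-1)(\ell-1)}E_m$ and then exploit the symmetry of the exponent $(m-1)(\ell-1)$ under $\ell\leftrightarrow m$. The only cosmetic difference is that the paper expands the left-hand side alone and explicitly regroups the resulting double sum by the $E_m$ in the $(i{+}1)$st slot to recover the right-hand side, whereas you expand both sides and finish by relabeling the summation indices; these are the same argument.
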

\begin{proof}
Splitting $R_{i+1}$ in its $E_1, E_2, \hdots, E_d$ components, i.e.,
\begin{equation*}
R_{i+1} = E_1 + \omega_{d^{i+1}} E_2 + \hdots + \omega_{d^{i+1}}^{d-1} E_d,
\end{equation*}
yields
\begingroup
\allowdisplaybreaks
\begin{align*}
\sum_{\ell=1}^{d}
E_\ell & \otimes \eeye[i-1] \otimes R_{i+1}^{\ell-1} \otimes \eeye[k-i]\\[5pt]
\ = \  &\begin{aligned}[t]
        &E_1 \otimes \eeye[i-1] \otimes (E_1 + \phantom{\omega_{d^{i+1}}} E_2 + \cdots + \phantom{\omega_{d^{i+1}}^{(d-1)^2}} E_d)
                           \otimes \eeye[k-i] \ + \\
        &E_2 \otimes \eeye[i-1] \otimes (E_1 + \omega_{d^{i+1}} E_2 + \cdots + \omega_{d^{i+1}}^{d-1\phantom{()^2}} E_d)
                           \otimes \eeye[k-i]\ + \\
        & \cdots\ + \\
        &E_d \otimes \eeye[i-1] \otimes (E_1 + \omega_{d^{i+1}}^{d-1} E_2 + \cdots + \omega_{d^{i+1}}^{(d-1)^2} E_d)
                           \otimes \eeye[k-i],
      \end{aligned}\\[5pt]           
\ = \ & \begin{aligned}[t]
        &E_1 \otimes \eeye[i-1] \otimes E_1 \otimes \eeye[k-i]\ +
        \hspace{-.7em} &E_2& \otimes \eeye[i-1] \otimes  E_1
                                \otimes \eeye[k-i]\ + \cdots\ + 
         \hspace{-.7em} &E_d& \otimes \eeye[i-1] \otimes E_1
                                \otimes \eeye[k-i]\  + \\
        &E_1 \otimes \eeye[i-1] \otimes E_2 \otimes \eeye[k-i]\ +
         \hspace{-.7em} &\omega_{d^{i+1}} E_2& \otimes \eeye[i-1] \otimes  E_2
                                \otimes \eeye[k-i]\ +
         \cdots\ +                        
         \hspace{-.7em} & \omega_{d^{i+1}}^{d-1} E_d&\otimes \eeye[i-1] \otimes E_2
                                \otimes \eeye[k-i]\  + \\
         &\cdots\ +\\
         &E_1 \otimes \eeye[i-1] \otimes E_d \otimes \eeye[k-i]\ +
         \hspace{-.7em} &\omega_{d^{i+1}}^{d-1} E_2& \otimes \eeye[i-1] \otimes  E_d
                                \otimes \eeye[k-i]\ +
         \cdots\ +                        
        \hspace{-.7em} &\omega_{d^{i+1}}^{(d-1)^2} E_d& \otimes \eeye[i-1] \otimes E_d
                                \otimes \eeye[k-i],
      \end{aligned} \\[5pt]  
\ = \ & \begin{aligned}[t]
        &(E_1 + \phantom{\omega_{2^{i+1}}} E_2 + \cdots + \phantom{\omega_{d^{i+1}}^{(d-1)^2}} E_d) \otimes
                \eeye[i-1] \otimes E_{1\phantom{-1}} \otimes \eeye[k-i] \ + \\
        &(E_1 + \omega_{d^{i+1}} E_2 + \cdots + \omega_{d^{i+1}}^{d-1\phantom{()^2}} E_d) \otimes
                \eeye[i-1] \otimes E_{2\phantom{-1}} \otimes \eeye[k-i]\ + \\
        & \cdots\ + \\
        &(E_1 + \omega_{d^{i+1}}^{d-1} E_2 + \cdots + \omega_{d^{i+1}}^{(d-1)^2} E_d) \otimes
                \eeye[i-1] \otimes E_{d-1} \otimes \eeye[k-i],
      \end{aligned} \\[5pt]
\ = \ &
\sum_{\ell=1}^{d}
R_{i+1}^{\ell-1} \otimes \eeye[i-1] \otimes E_\ell  \otimes \eeye[k-i],             
\end{align*}
\endgroup
where we subsequently used the distributivity and scalar shift properties
\eqref{eq:kron-prop} of the Kronecker product, and combined the terms with 
 the same matrix, $E_1$, $E_2$ up to $E_d$,
 in the $(i+1)$st position.
\end{proof}

\subsection{Quantum circuit for Radix-d QFT}

The generalization of a qubit for encoding a $d$-dimensional vector
is a \emph{qudit}. When $d=3$, it is called a \emph{qutrit}. 
The implementation of the QFT on a quantum computer working with qudits is studied
in \cite{Cao2011, Stroud2002, Dogra2015}. In the current section, we show that our
derivation of the QFT on a qubit system extends to qudit systems in a straightforward manner,
and leads to a class of equivalent QFT circuits on a qudit system.

Single qudit gates are unitary $d\times d$ matrices and the state space of
multi-qudit systems is $(\C[d])^{\otimes n}$.
A controlled-$U$ operation acting on two qudits is denoted with the same circuit
symbol as for the qudit case, but the action on the target qudit is now dependent
on the $d$ different states of the control qudit:
\[ \Qcircuit @C=1em @R=1em {
   & \ctrl{1} &  \qw\\
   & \gate{U} & \qw}
\raisebox{-1em}
{
$\qquad \leftrightarrow \qquad
(E_1 \otimes \eye[d]) + (E_2 \otimes U) + \cdots + (E_d \otimes U^{d-1}) =: U_c,
$}
\]
with $E_k := e_k e_k\T \inC[d][d]$, for $k=1,\hdots,d$.
This definition is in agreement with  the definition of a controlled-NOT operation
on a $d$-level quantum system, also called a SUM gate \cite{Gottesman}.

\Cref{trdfft} gives the following quantum circuit for the Fourier transform of dimension $d^n \times d^n$
acting on $n$ qudits:
\[\resizebox{.85\hsize}{!}{$
\Qcircuit @C=1em @R=1em {
& & & & & & & \lstick{q_1} & \gate{F_d} & \multigate{4}{\eeye[n-1] \oplus \eOmega_{n-1} \oplus \cdots \oplus \eOmega_{n-1}^{d-1}} &  \qw & \qw & \qw & \cdots & & \qw & \qw & \qw &\qswap & \qw\\
& & & & & & & \lstick{q_2} & \qw & \ghost{\eeye[n-1] \oplus \eOmega_{n-1} \oplus \cdots \oplus \eOmega_{n-1}^{d-1}} & \gate{F_d} & \multigate{3}{\eeye[n-2] \oplus \eOmega_{n-2} \oplus \cdots \oplus \eOmega_{n-2}^{d-1}} & \qw & \cdots & & \qw & \qw & \qswap & \qw & \qw\\
\lstick{\ket{\expsi_n}} &\qw{/} & \gate{\eF_{n}} & \qw  & = & & \raisebox{.5em}{\vdots} & & &  & & \pureghost{\eeye[n-1] \oplus \eOmega_{n-1} \oplus \cdots \oplus \eOmega_{n-1}^{d-1}} & & \raisebox{.5em}{\vdots}\\
& & & & & & & \lstick{q_{n-1}} & \qw & \ghost{\eeye[n-1] \oplus \eOmega_{n-1} \oplus \cdots \oplus \eOmega_{n-1}^{d-1}} & \qw & \ghost{\eeye[n-2] \oplus \eOmega_{n-2} \oplus \cdots \oplus \eOmega_{n-2}^{d-1}} &  \qw & \cdots & & \gate{F_d} & \multigate{1}{\eeye[1] \oplus \eOmega_1 \oplus \cdots \oplus \eOmega_{1}^{d-1}} & \qswap \qwx[-2] & \qw & \qw\\
& & & & & & & \lstick{q_n} & \qw & \ghost{\eeye[n-1] \oplus \eOmega_{n-1} \oplus \cdots \oplus \eOmega_{n-1}^{d-1}} & \qw & \ghost{\eeye[n-2] \oplus \eOmega_{n-2} \oplus \cdots \oplus \eOmega_{n-2}^{d-1}} &  \qw & \cdots &  &\qw & \ghost{\eeye[1] \oplus \eOmega_1 \oplus \cdots \oplus \eOmega_{1}^{d-1}} & \gate{F_d} & \qswap \qwx[-4] & \qw
}
$}\hspace{-10pt}\]
The structure of this circuit is exactly the same as in the qubit case. The difference is that the Hadamard gate $H$ is replaced by the \emph{Fourier}
gate $F_d$ which applies the $d$-dimensional DFT matrix to a single qudit, and that the diagonal operators now consist of a direct sum of $d$ diagonal
matrices.

Just like before, these diagonal operators can be synthesized in controlled-$R_j$ operators acting on two qudits by \Cref{thm:rd-diagdecomp}
and \Cref{lem:dreorder}:
\[
\Qcircuit @C=1em @R=1em {
\lstick{q_1} & \multigate{4}{\eeye[k] \oplus \eOmega_{k} \oplus \cdots \oplus \eOmega_{k}^{d-1}} & \qw  & & & & & & \lstick{q_1} & \ctrl{1} & \qw & & & & & & \lstick{q_1}  & \ctrl{4}  & \gate{R_2} & \qw & \cdots & & \ctrl{3} & \qw\\
\lstick{q_2}  & \ghost{\eeye[k] \oplus \eOmega_{k} \oplus \cdots \oplus \eOmega_{k}^{d-1}} & \qw & & & & & & \lstick{q_2} & \multigate{3}{\eOmega_k} & \qw & & & & & & \lstick{q_2} & \qw & \ctrl{-1} & \qw & \cdots & & \qw & \qw \\
\lstick{\raisebox{.5em}{\vdots}}& \pureghost{\eeye[k] \oplus \eOmega_{k} \oplus \cdots \oplus \eOmega_{k}^{d-1}} & & & = & & & & \lstick{\raisebox{.5em}{\vdots}} & \pureghost{\eOmega_{k}} & && = & & & & \lstick{\raisebox{.5em}{\vdots}} & & & & \raisebox{.5em}{\vdots} & \\
\lstick{q_k} & \ghost{\eeye[k] \oplus \eOmega_{k} \oplus \cdots \oplus \eOmega_{k}^{d-1}} & \qw & & & & &  & \lstick{q_k} & \ghost{\eOmega_{k}} & \qw & & & & & & \lstick{q_k} & \qw & \qw & \qw & \cdots & & \gate{R_k} & \qw\\
\lstick{q_{k+1}}  & \ghost{\eeye[k] \oplus \eOmega_{k} \oplus \cdots \oplus \eOmega_{k}^{d-1}} & \qw & & & & & & \lstick{q_{k+1}} & \ghost{\eOmega_{k}} & \qw & & & & & & \lstick{q_{k+1}} & \gate{R_{k+1}} &\qw  & \qw & \cdots & & \qw & \qw
}
\]
The representation is only one of the many possible implementations of the diagonal matrix under the freedom
allowed by the commutativity and swap property (\Cref{lem:dreorder}) of controlled-$R$ gates.

We conclude that the gate count for a quantum Fourier transform on an $n$ qudit $d$-level system is still $\bigO(n^2)$,
but now for a QFT of a state vector of dimension $d^n$.
This results in a $\log_2 d$ reduction in gates for a qudit system compared to a qubit system 
that has a state space of the same dimension at the cost of more complex controlled operations.

\section{Conclusion}

In this paper, we showed how the quantum Fourier transform algorithm can be 
derived as a decomposition of the discrete Fourier matrix. 
The derivation starts from the radix-2 decomposition of the DFT matrix
that yields the FFT algorithm, and only makes use of Kronecker products
to further decompose a diagonal matrix into a
product of simpler unitary matrices, each of which can be written as the sum 
of two Kronecker products of $2\times 2$ matrices. 
This alternative approach to the derivation of the quantum Fourier transform in \cite{NC2010,Coppersmith1994} requires little knowledge of quantum 
computing.

We showed in section~\ref{sec:rank1} that the Kronecker structure of the QFT decomposition 
does not lead to an immediate reduction in complexity compared to that of
the FFT algorithm on a classical computer,
even when the QFT is applied to a rank-1 tensor. We explained why the 
complexity of the QFT on a quantum computer can be evaluated by counting the
number of $2\times 2$ matrices produced by the QFT matrix 
decomposition, which is $\bigO((\log N)^2)$ for a transformation of a
vector of size $N = 2^n$.
We made the connection between the matrix decomposition of the QFT and
the quantum circuit representation widely used in the quantum computing 
literature. We pointed out that the QFT decomposition of the DFT matrix 
and the corresponding quantum circuit is not unique. The non-uniqueness
of the decomposition can potentially provide some flexibility in 
the quantum circuit topology in terms of the placement of controlled-$R$ gates.
We also generalized the radix-2 decomposition to a radix-$d$ 
decomposition which requires the QFT to be implemented on a quantum
computer equipped with qudits.

Although the QFT can be performed efficiently on a quantum computer, the 
result of the transform is not easily accessible.
This is a key difference between the QFT and FFT, and
between a quantum and classical algorithm in general.
Because of this, one cannot use the QFT in the same way the FFT is used.
For example, it is not straightforward to perform a fast 
convolution of two vectors~\cite{Lomont2003}, which is the most widely used application
of the FFT, on a quantum computer using QFTs.
On the other hand,
the QFT is used as a building block for several quantum algorithms
such as in phase estimation~\cite{Kitaev95, Shor1997}. However, how the QFT is used in these algorithms
is beyond the scope of this paper. 

\section*{Acknowledgments}
This work was supported by the
Laboratory Directed Research and Development Program
of Lawrence Berkeley National Laboratory under
U.S. Department of Energy Contract No. DE-AC02-05CH11231.

\bibliographystyle{plain}
\bibliography{references}

\end{document}